\newcommand{\R}{\mathbb R}
\newcommand{\N}{\mathbb N}
 \newtheorem{teo}{Theorem}[section]
 \newtheorem{lemma}[teo]{Lemma}
 \newtheorem{prop}[teo]{Proposition}
 \theoremstyle{definition}
 \newtheorem{defin}[teo]{Definition}
 \theoremstyle{remark}
 \newtheorem{rem}[teo]{Remark}
 \newtheorem*{ex}{Example}
\DeclareMathOperator*{\liminj}{lim_{\rightarrow}}
\DeclareMathOperator{\mult}{mult} \DeclareMathOperator{\Hom}{Hom}
\begin{document}

\title{Morse Theory for Geodesics in Conical Manifolds}
\author{Marco G. Ghimenti\thanks{
Dipartimento di Matematica, Universit\`a di Pisa, via Buonarroti 1c, 56127, Pisa, Italy}
}
\date{}
\maketitle

\begin{abstract}
The aim of this paper is to extend the Morse theory for geodesics
to the conical manifolds. In a previous paper \cite{Ghi05} we
defined these manifolds as
submanifolds of $\R^n$ with a finite number of conical
singularities. To formulate a good Morse theory we use an
appropriate definition of geodesic, introduced in the cited work. The
main theorem of this paper (see theorem \ref{main},
section \ref{morsesec}) proofs
that, although the energy is nonsmooth, we can find
a continuous retraction of its sublevels in absence of
critical points. So, we
can give a good definition of index for isolated critical values
and for isolated critical points. We
prove that Morse relations hold and, at last, we give a definition
of multiplicity of geodesics which is geometrical
meaningful. In section \ref{sezconfronto} we compare our theory with
the weak slope approach existing in literature. Some examples
are also provided.\\
MSC Numbers: 58E05; 49J52; 58E10\\
Keywords: Geodesics, Morse Theory, Nonsmooth Analysis,  Nonsmooth Manifolds

\end{abstract}
\section{Introduction}

In a previous work \cite{Ghi05} we introduced conical manifolds as
submanifolds of $\R^n$, giving the following definition:
\begin{defin}\label{defvarieta}
A conical manifold $M$ is a complete $n$-dimensional $C^0$ sub
manifold of $\R^m$ which is everywhere smooth, except for a finite
set of points $V$. A point in $V$ is called vertex.
\end{defin}

These kind of manifold have many common features with other
singular manifolds present in literature, as the {\em
Cone-manifolds\/} studied by Hodgson and Tysk \cite{HT93}, the
{\em piecewise linear manifolds\/} (see, e.g
\cite{Ban67,Sto73,Sto76}), or the {\em Orbifolds\/}
(\cite{BL96,Bor92,Thu78}); in \cite{Ghi04} these links are briefly
examined.

Furthermore, the geodesic problem on a singular manifold is
studied by Degiovanni and Morbini, in \cite{DM99}. In section
\ref{sezconfronto} we will point out the common feature and the
main differences between our approach and the Degiovanni and
Morbini one.

In our previous work we gave the following definition of geodesic,
which seems to be appropriate for these kind of problems
(\cite[Definition 2]{Ghi05}).
\begin{defin}
\label{defgeod} A path $\gamma\in H^{1,2}([0,1],M)$ is a geodesic
iff
\begin{itemize}
\item{ the set $T=T_\gamma:=\{s\in(0,1):\gamma(s)\in V\}$ is a
closed set without internal part;} \item $D_s\gamma'=0\,\,\forall
s\in[0,1]\smallsetminus T$; \item $|\gamma'|^2$ is constant as a
function in $L^1$.
\end{itemize}
\end{defin}
\noindent By this definition, we were able to prove a deformation
lemma that allows us to apply the main theorems of the calculus of
variation in the large. In particular we can estimate the number
of geodesics using the Ljusternik-Schnirelmann theory
(\cite[Theorem 1]{Ghi05}).

In this work we will associate to each geodesic $\gamma$ an index
$i(\gamma)$ similar to the Morse index, that allows us to obtain
some information on the qualitative properties of the geodesics.
By this index we state also an analogous of the Morse relations
that hold in the smooth case.

As usual, given $p,q \in M$, we set
\begin{displaymath}
{\Omega}={\Omega}_{p,q}:= \left\{\gamma \in
W^{1,2}([0,1],\R^n)|\gamma(0)=p,\gamma(1)=q, \gamma([0,1])\subset
M \right\},
\end{displaymath}
the suitable path space, and a functional
\begin{equation}
E:{\Omega}\rightarrow\R\label{defen1}
\end{equation}
\begin{equation}
E(\gamma)=\int\limits_0^1\|\gamma'\|^2dt.\label{defen2}
\end{equation}
Moreover we set
\begin{displaymath}
{\Omega}^c={\Omega}^c_{p,q}:=\left\{ \gamma\in {\Omega},
E(\gamma)\leq c \right\}.
\end{displaymath}
\begin{displaymath}
{\Omega}^b_a=\{{\Omega}_{p,q}\}^b_a:=\left\{ \gamma\in {\Omega},
a\leq E(\gamma)\leq b \right\}.
\end{displaymath}
Under suitable assumption, if $M$ is a generic smooth manifold, we
know that the following Morse relations hold.
\begin{equation}
\label{morseclass} \sum_{K\cap{\Omega}_a^b}\lambda^{m(\gamma)}= {
P}_\lambda({\Omega}^a,{\Omega}^b)+(1+\lambda) {  Q}_\lambda.
\end{equation}
\begin{equation}
\label{morseclass2} \sum_{K}\lambda^{m(\gamma)}=
{P}_\lambda({\Omega})+(1+\lambda) {  Q}_\lambda.
\end{equation}
where $K$ is the set of geodesics, ${  P}_\lambda(X,Y)$ is the
Poincar\'e polynomial (in the variable $\lambda$) of the couple
$(X,Y)$, ${  Q}_\lambda$ is a formal series with coefficient in
$\N \cup \{+\infty\}$, and $m(\gamma)$ is the Morse index of
the geodesic $\gamma$, i.e. the signature of the second variation
of the energy. Furthermore, if we take a geodesic $\gamma$ s.t.
$E(\gamma)=c$ and
${\Omega}^{c+{\varepsilon}}_{c-{\varepsilon}}\cap K=\gamma$ for
some ${\varepsilon}>0$, then we have that
\begin{equation}\label{morseclass3}
\lambda^{m(\gamma)}={
P}_\lambda({\Omega}^{c+{\varepsilon}},{\Omega}^{c-{\varepsilon}})
\end{equation}
(for some references on Morse theory the reader can check, for
example \cite{Pal63}, \cite{Ben91,Ben95,BG94},\cite{Cha85}).

Coming back to conical manifold, the usual definition of
$m(\gamma)$ makes no sense since the energy is not differentiable.
However, under suitable assumptions, we can define the index of a
geodesic as the formal polynomial
\begin{equation}
i(\gamma)={
P}_\lambda({\Omega}^{c+{\varepsilon}},{\Omega}^{c-{\varepsilon}}),
\end{equation}
according to (\ref{morseclass3}). In this case the Morse relations
(\ref{morseclass}) and (\ref{morseclass2}) become
\begin{equation}
\label{morseconic} \sum_{K\cap{\Omega}_a^b}i(\gamma)= {
P}_\lambda({\Omega}^a,{\Omega}^b)+(1+\lambda) {  Q}_\lambda.
\end{equation}
\begin{equation}
\label{morseconic2} \sum_{K}i(\gamma)= {
P}_\lambda({\Omega})+(1+\lambda) {  Q}_\lambda.
\end{equation}
In this work we prove that $i(\gamma)$ is a good definition, i.e.
does not depends on ${\varepsilon}$ and that (\ref{morseconic})
and (\ref{morseconic2}) hold.

We have also that, while in a smooth manifold, for a generic
geodesic, $i(\gamma)$ is a monome in $\lambda$, for a geodesic in
a conical manifold this is not true. Then we have introduce the
multiplicity of a geodesic (Definition \ref{mult}) as
\begin{equation}
\mult({\gamma})=i(\gamma)|_{\lambda=1}.
\end{equation}
we show some examples in which  an high multiplicity, or a 0
multiplicity of a geodesic occurs.

Also, the definition of index that we give, allows us to make a
comparison between our approach to conical manifold and the
approach of Degiovanni, Marzocchi and Morbini (Section
\ref{sezconfronto}), which is based on the concept of the {\em
weak slope\/} (see \cite{CD94,CDM93,De96,DM94}), and was presented
in \cite{DM99,MM02}

\section{Preliminary Results}
In this section we present some peculiarities of the study of
geodesics in conical manifold, which motivate Definition
\ref{defgeod}. Also, we resume briefly some result contained in
\cite{Ghi05} which will be useful for this paper.

Usually there are two ways to introduce geodesics in a smooth
manifolds: at first we can formulate a Cauchy problem, i.e., given
$p\in M$, $v\in T_pM$, we look for a continuous curve
$\gamma:I\rightarrow M$ s.t.
\begin{equation}\label{PbCau}
\left\{
\begin{array}{l}
D_s\gamma'=0;\\
\gamma(0)=p;\\
\gamma'(0)=v.
\end{array}
\right.
\end{equation}
Otherwise we can choose a suitable path space and an energy
functional, for example, the space ${\Omega}$ and the functional
$E$ previously defined, and we look for critical points of the
energy.

Contrarily to the smooth case, these two methods are not
equivalent for manifolds which have conical singularities, and
each one of them have some peculiarity. If we try to solve the
Cauchy problem we have neither uniqueness nor continuous
dependence from starting condition: David Stone (see \cite{Sto73},
\cite{Sto76}) showed that there are piecewise linear manifold with
a vertex $p$ (indeed a special case of conical manifolds) in
which, given a point $q$, there is a family of minimal geodesics
solving (\ref{PbCau}) that are the same straight line between $q$
and $p$, then start again from $p$ with different angles. So
(\ref{PbCau}) has many solutions, and there is no reasonable
criterion to choose one of these geodesics instead of another.

\begin{figure}[h]
\begin{center}
\includegraphics[scale=.7,angle=270]{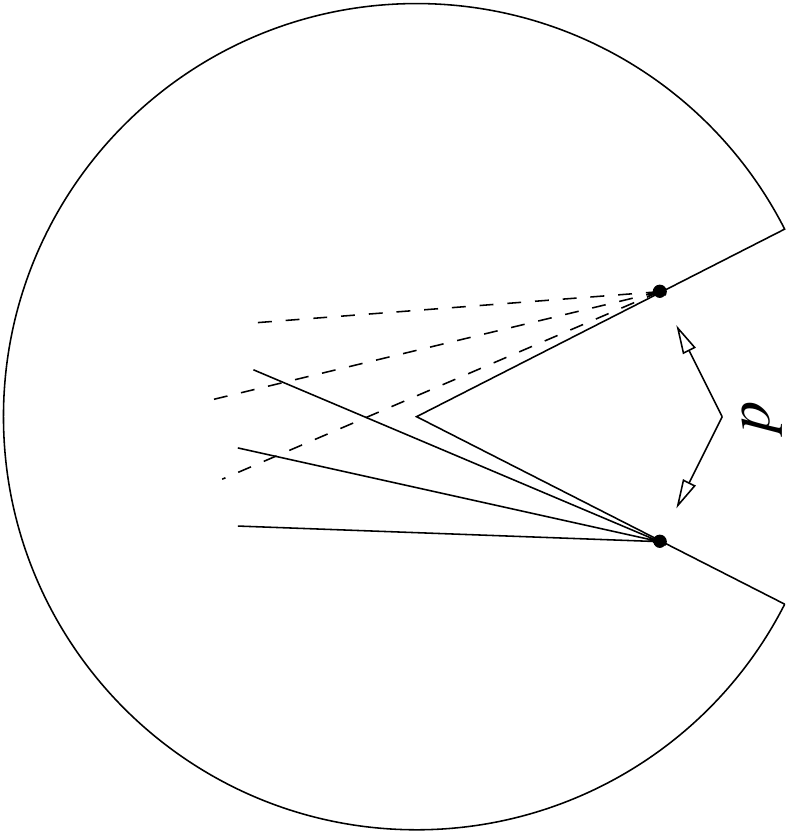}
\caption{Geodesics starting from a point.} \label{Figure1}
\end{center}
\end{figure}

Now, taken an Euclidean half cone, we can represent it as a
circular sector in a plane, in which the straight edges are
identified. Taking $p$ on these edges, we represent the geodesics
as straight lines starting from $p$. In Fig. \ref{Figure1} we show
two sequence of geodesics, respectively the continuous and the
dotted lines, approaching the edges of the circular sector. It is
easy to see that a dotted and a continuous line can not be too
close, even if they approach the geodesic which start from $p$ and
goes to the vertex. Even in a familiar case we are not able to say
how can a geodesic pass through the vertex, in order to have
continuous dependence from initial data.

Also, if we take two points, say $p$ and $q$, symmetrical respect
to the vertex, we can say that there are always two minimal
geodesics joining them, even if $p$ and $q$ are as close as we
want; so it is not possible to define a normal neighborhood of the
vertex. Thus we can not hope to define a global exponential map.

The functional approach, on the contrary, gives us an easy result
on minimal geodesics. To finding minima is useful to extend $E$ in
the whole space

\begin{equation}
W^{1,2}_{p,q}: =\left\{\gamma \in W^{1,2}([0,1],\R^n)|\
\gamma(0)=p,\gamma(1)=q\right\}
\end{equation}
and to look at minima of $E$ with the constraint
$\text{Im}(\gamma)\subset M$; in this way we know that $E$ is
lower semi continuous for the $W^{1,2}$-weak topology and the set
$\big\{\gamma|\gamma\in {\Omega}\cap W^{1,2}_{p,q},\,
\|\gamma\|_{W^{1,2}}<c\big\}$ is weakly compact in
$W^{1,2}_{p,q}$, because $M$ is complete: then the problem
\begin{equation}
\min_{\substack{\gamma\in W^{1,2}_{p,q}\\\text{Im}\gamma\subset
M}} E(\gamma)
\end{equation}
has a solution, and  we can formulate the following theorem.

\begin{teo}
\label{min} Let $M$ be a conical manifold; let $p,q\in M$.

Then, the energy functional
\begin{equation}
\begin{array}c
E:{\Omega}_{p,q}\rightarrow\R\\
E(\gamma)=\int\limits_0^1\|\gamma'\|^2dt.
\end{array}
\end{equation}
has a minimum.

\end{teo}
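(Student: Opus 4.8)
The plan is to apply the direct method of the calculus of variations, along the lines sketched in the paragraph preceding the statement. First I would check that $\Omega_{p,q}$ is nonempty and that $c:=\inf_{\Omega_{p,q}}E$ is finite: since $M$ is a connected complete manifold, the points $p$ and $q$ can be joined by a piecewise smooth curve lying in $M$, which after an affine reparametrization belongs to $\Omega_{p,q}$ and has finite energy, so $0\le c<+\infty$. I then fix a minimizing sequence $(\gamma_n)\subset\Omega_{p,q}$ with $E(\gamma_n)\to c$.

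The next step is coercivity, to produce a weak limit. Since $\gamma_n(0)=p$, we have $\gamma_n(t)=p+\int_0^t\gamma_n'\,ds$, whence by Cauchy--Schwarz $\|\gamma_n\|_{L^\infty}\le |p|+\|\gamma_n'\|_{L^1}\le |p|+\|\gamma_n'\|_{L^2}$, while $\|\gamma_n'\|_{L^2}^2=E(\gamma_n)$. As $E(\gamma_n)$ is bounded, the sequence $(\gamma_n)$ is bounded in the Hilbert space $W^{1,2}([0,1],\R^n)$; by reflexivity I can extract a subsequence, still denoted $(\gamma_n)$, with $\gamma_n\rightharpoonup\gamma$ weakly in $W^{1,2}$.

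It remains to verify that the weak limit $\gamma$ is admissible and minimizes $E$. The functional $E(\gamma)=\|\gamma'\|_{L^2}^2$ is the square of the $L^2$-norm of the derivative, hence a continuous convex function on $W^{1,2}$, and therefore weakly lower semicontinuous; this gives $E(\gamma)\le\liminf_n E(\gamma_n)=c$. For admissibility I would invoke the one-dimensional Sobolev embedding $W^{1,2}([0,1],\R^n)\hookrightarrow C^0([0,1],\R^n)$, which is compact, so that $\gamma_n\to\gamma$ uniformly. Uniform convergence makes evaluation at the endpoints continuous, yielding $\gamma(0)=p$ and $\gamma(1)=q$; moreover, since each $\gamma_n([0,1])\subset M$ and $M$ is closed in the ambient Euclidean space (this is where the completeness hypothesis enters), the uniform limit satisfies $\gamma([0,1])\subset M$. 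Thus $\gamma\in\Omega_{p,q}$ and $E(\gamma)\le c$, which forces $E(\gamma)=c$, so the minimum is attained.

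The only genuinely delicate point is the preservation of the constraint $\mathrm{Im}(\gamma)\subset M$ when passing to the weak limit --- equivalently, the weak closedness in $W^{1,2}$ of the constrained set, which the excerpt records as weak compactness of its bounded sublevels. This is exactly what the compact embedding into $C^0$ together with the closedness (completeness) of $M$ supplies; crucially, no smoothness or regularity of $M$ at the vertices is required, since the argument is purely topological and only the ambient uniform convergence of the minimizing sequence is used.
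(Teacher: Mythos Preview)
Your argument is correct and follows exactly the direct method the paper sketches in the paragraph immediately preceding the theorem: weak lower semicontinuity of $E$ on $W^{1,2}$, together with weak (pre)compactness of the constrained sublevels coming from completeness of $M$. You have simply unpacked in detail the two ingredients the paper names in one sentence, supplying the coercivity estimate and making explicit that the constraint $\mathrm{Im}(\gamma)\subset M$ survives the weak limit via the compact embedding $W^{1,2}\hookrightarrow C^0$ and the closedness of $M$.
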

\begin{proof}
Just proved.
\end{proof}
However, this approach is not completely useful, because the
energy is not smooth, so there is no easy way to define a critical
point of energy different from minimum. In the cited work we gave
a {\em generalized\/} definition of geodesic (Definition 2) which
seems to be the right one for these kind of problems.

In fact, although this is a local definition, it let us show two
deformation lemmas typical of the calculus of variation in the
large (\cite[Theorem 2 and Theorem 7]{Ghi05}). This lemmas are
crucial for this work and are briefly recalled here. In the next
section we will provide also some sketch of the proof of these
results.

\begin{lemma}[First deformation lemma]\label{deflem1}
Let $M$ be a conical manifold, and let $p,q\in M$. Suppose that
there exists $c>0$ s.t. there are a finite number of geodesics
with energy lesser than $c$. Suppose also that there are no
geodesics which energy is in the strip $[a,b]$ for some
$a,b\in\R$, $a<b<c$. Then ${\Omega}^ a$ is a deformation
retract of ${\Omega}^b$.
\end{lemma}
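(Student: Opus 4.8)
The plan is to prove the lemma by the classical scheme of the calculus of variations in the large, namely by constructing an energy-decreasing deformation, but replacing the (unavailable) smooth gradient flow by a pseudo-gradient flow adapted to the conical structure. The guiding principle is that away from the vertex set $V$ the manifold $M$, and hence the energy $E$, is smooth, so that the only genuine difficulty concerns paths whose image meets $V$.

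First I would establish the compactness needed to run the deformation. Using the weak lower semicontinuity of $E$ and the weak compactness of bounded sublevels already exploited in Theorem \ref{min}, together with the completeness of $M$, I would verify a Palais--Smale type condition: any sequence $(\gamma_n)\subset\Omega^b$ along which the admissible descent rate of $E$ tends to zero subconverges to a geodesic in the sense of Definition \ref{defgeod}. Since by hypothesis there are finitely many geodesics with energy below $c$ and none with energy in $[a,b]$, this yields a uniform constant $\delta>0$ bounding the admissible descent rate from below on the strip $\Omega_a^b$.

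Next I would build the flow. On a neighborhood of the strip I construct a locally Lipschitz vector field $X$ on $\Omega$, tangent to the constraint $\mathrm{Im}(\gamma)\subset M$, along which $E$ decreases at rate at least $\delta/2$ whenever $a\le E(\gamma)\le b$. For paths avoiding $V$ this is the standard pseudo-gradient of the smooth energy; for a path meeting a vertex I would use the local cone model to exhibit an admissible direction that lowers the energy, and patch these local fields to the smooth one by a locally finite partition of unity. Integrating $\dot\eta=X(\eta)$ with $\eta(0,\gamma)=\gamma$, the uniform descent estimate forces $E(\eta(t,\gamma))$ to decrease to the level $a$ in finite time; stopping each trajectory at its first hitting time of $\{E=a\}$ and keeping $\Omega^a$ pointwise fixed throughout produces the desired deformation retraction $r:\Omega^b\to\Omega^a$.

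The step I expect to be the main obstacle is the construction of the descent field $X$ at paths that touch the vertices, where $E$ fails to be differentiable. One must check that the cone-adapted local directions genuinely decrease energy, remain admissible (do not push $\gamma$ off $M$), and glue continuously and Lipschitz-continuously to the smooth pseudo-gradient, so that the flow and the time needed to reach $\Omega^a$ depend continuously on the initial path. This is precisely the point at which Definition \ref{defgeod} is decisive: it singles out the geodesics as exactly those configurations admitting no such decreasing admissible direction, so that off the finite geodesic set the field $X$ can always be constructed, which is the content of the deformation results of \cite{Ghi05} recalled here.
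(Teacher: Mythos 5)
Your overall strategy (uniform descent rate on the strip, descending flow, stopping at the level $a$) is the classical one and matches the paper's in spirit, but there is a genuine gap at exactly the step you yourself flag as ``the main obstacle,'' and flagging it is not the same as closing it. You propose to patch cone-adapted descent directions at vertex-touching paths together with the smooth pseudo-gradient into a single locally Lipschitz vector field $X$ with a uniform descent rate. This is precisely what cannot be done near the set $\Sigma$ of paths whose image meets $V$: there the energy admits directional derivatives only along certain special fields $w_\gamma$, and these admissible directions do not vary continuously as $\gamma$ moves onto and off the vertices (the discussion around Figure \ref{Figure1} --- two families of geodesics through the vertex that can never be brought close to one another --- is a symptom of exactly this discontinuity). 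The argument recalled from \cite{Ghi05} is structured so as to avoid a single field: it isolates the compact sets $\Sigma_i$ of reparametrized geodesics meeting $V$ (Lemma \ref{comp}), builds a bespoke energy-decreasing deformation $\eta_R$ on a neighborhood $R\supset\bigcup_i\Sigma_i$ (Lemma \ref{retrsigma}), builds a \emph{separate} deformation $\eta_U$ on the complement of a smaller neighborhood $U$ (Lemma \ref{retru}, where the pointwise directions $w_\gamma$ and the smooth pseudo-gradient construction are used), and then interleaves the two deformations by means of cut-off functions acting on the time variable, iterating until the prescribed level is reached (see the proof of Theorem \ref{main}). Nothing in that construction produces, or needs, a Lipschitz descent field defined across $\Sigma$.

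Two further points. First, your Palais--Smale step is under-justified: weak $W^{1,2}$ subconvergence of an almost-critical sequence does not by itself yield a limit geodesic, nor a uniform lower bound $\delta$ for the descent rate in the strong topology in which the deformation must be continuous; the paper replaces this with the directly proved compactness of the sets $\Sigma_i$. Second, your closing sentence defers the construction of $X$ near the vertices to ``the deformation results of \cite{Ghi05} recalled here'' --- but Lemma \ref{deflem1} \emph{is} one of those results, so as written the argument is circular at its crucial point.
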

\begin{lemma}[Second deformation lemma]\label{deflem2}
Let $M$ be a conical manifold, $p,q\in M$. Suppose that there
exists $c>0$ s.t. there are a finite number of geodesics with
energy lesser than $c$. Suppose also that there are no geodesics
which energy is in the strip $[a,b)$ for some $a,b\in \R$,
$a<b<c$. Set $k_b$ the set of $b$-energy geodesics, then there
exists a neighborhood $U$ of $k_b$ s.t.
\begin{displaymath}
{\Omega}^b\smallsetminus U\simeq{\Omega}^a.
\end{displaymath}
\end{lemma}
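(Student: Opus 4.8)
The plan is to reduce the statement to the First Deformation Lemma (Lemma \ref{deflem1}) by pushing $\Omega^b$ slightly below level $b$ after excising a small neighborhood of the level-$b$ geodesics. First I would record that $k_b$ is finite: since $b<c$ and, by hypothesis, only finitely many geodesics have energy below $c$, the set $k_b$ of geodesics with $E=b$ is a finite collection of points of $\Omega$. Fixing disjoint open $W^{1,2}$-balls around these points and letting $U$ be their union, I may shrink the radii so that, by continuity of $E$ in the strong $W^{1,2}$ topology, every path in $U$ has energy close to $b$; in particular $U\cap\Omega^a=\emptyset$ and $\Omega^{b-\delta}\subset\Omega^b\smallsetminus U$ for a suitable small $\delta>0$, while also $\Omega^a\subset\Omega^b\smallsetminus U$.

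Second, I would construct a deformation carrying $\Omega^b\smallsetminus U$ into the sublevel $\Omega^{b-\delta}$. On the set of energies $[b-\delta,b]$ the only geodesics are those in $k_b$, and these lie inside $U$; hence on $\Omega^b\smallsetminus U$ the energy satisfies the quantitative descent estimate underlying the deformation construction of \cite{Ghi05}, i.e. one can flow along a vector field that strictly decreases $E$ and is bounded away from zero outside $U$. Following this flow for a finite time pushes every path of $\Omega^b\smallsetminus U$ with energy in $(b-\delta,b]$ below level $b-\delta$, while leaving $\Omega^{b-\delta}$ fixed; this produces a deformation retraction of $\Omega^b\smallsetminus U$ onto $\Omega^{b-\delta}$.

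Third, since there are no geodesics with energy in $[a,b)$, in particular the closed strip $[a,b-\delta]$ contains none, so Lemma \ref{deflem1} applies and yields that $\Omega^a$ is a deformation retract of $\Omega^{b-\delta}$. Composing the two deformation retractions (using $\Omega^a\subset\Omega^{b-\delta}\subset\Omega^b\smallsetminus U$) gives a deformation retraction of $\Omega^b\smallsetminus U$ onto $\Omega^a$, whence $\Omega^b\smallsetminus U\simeq\Omega^a$, as claimed.

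The main obstacle is the second step: because $E$ is not differentiable---the singularity being concentrated exactly on paths touching the vertex set $V$---I cannot use an ordinary negative gradient flow and must instead adapt the nonsmooth deformation machinery of \cite{Ghi05} to the region $\Omega^b\smallsetminus U$. The delicate point is to guarantee a uniform descent rate there: one must verify that, away from the finite critical set $k_b$, the energy has no directions along which descent stalls, so that a Palais--Smale-type compactness forces the flow to reach level $b-\delta$ in finite time without re-entering $U$. Controlling the interaction of this flow with the constraint $\mathrm{Im}\,\gamma\subset M$ near the vertices, where the conical structure enters essentially, is the heart of the argument.
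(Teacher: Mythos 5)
Your strategy --- excise a small neighborhood of the finite set $k_b$, run a uniform-descent deformation on the strip outside it down to a level $b-\delta$, then invoke Lemma \ref{deflem1} --- is essentially the argument the paper intends; note, though, that the paper does not actually prove this lemma here but imports it from \cite[Theorem 7]{Ghi05}, recalling only the ingredients (Lemmas \ref{retrsigma} and \ref{retru}) and displaying the technique in the proof of Theorem \ref{main}. Two points in your second step need more than you give them. First, the locus where $E$ fails to be differentiable in the strip ${\Omega}^b_{b-\delta}$ is not concentrated near $k_b$: it is the set $\Sigma$ of all paths meeting the vertex set $V$, and in particular it contains the affine reparametrizations $\Sigma_j$ of \emph{lower}-energy geodesics whose energy has been pushed up into $(b-\delta,b]$; these lie far from any neighborhood of $k_b$, so the vector field you invoke, ``strictly decreasing $E$ and bounded away from zero outside $U$,'' cannot be a pseudo-gradient alone --- one must glue the special descent of Lemma \ref{retrsigma} on a neighborhood $R$ of those sets to the construction of Lemma \ref{retru} outside, exactly as in the proof of Theorem \ref{main}. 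Second, your claim that the flow yields a deformation retraction of ${\Omega}^b\smallsetminus U$ onto ${\Omega}^{b-\delta}$ requires that trajectories starting outside $U$ never re-enter $U$; you correctly name this as the delicate point but do not close it, and it is the one place the argument could genuinely fail, since near $k_b$ itself no descent is available and a single flow could drift toward the critical set. The paper's device for the analogous difficulty is a nested chain of neighborhoods $\bigcup_i\Sigma_i\subsetneq U\subsetneq V\subsetneq R$ together with two deformations $\mu_1,\mu_2$ alternated via cutoff functions $\theta_1,\theta_2$, iterated finitely many times; adapting that alternation scheme (with the critical set permanently excised rather than crossed), instead of integrating one global vector field, is what actually produces the homotopy equivalence ${\Omega}^b\smallsetminus U\simeq{\Omega}^a$.
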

These results link the topological structure of sublevels of $E$
with our definition of geodesics and allow us to consider
geodesics as critical points of energy. By these lemmas, in the
cited work we proved an estimate on the number of geodesic in a
conical manifold, via the Ljusternik-Schnirelmann theory. In this
paper these lemmas allow us to prove that the definition of the
index $i(\gamma)$ is a good definition.

\section{Morse Theory For Critical Levels}
\label{morsesec} In order to define the index of a critical point
and of a critical level, we recall the definition of Poincar\'e
polynomial
\begin{defin}\label{defpol}
Let $H^*$ the Alexander-Spanier cohomology with coefficient in a
field ${\mathbb F}$. For every pair $(X,A)$ of closed spaces, the
Poincar\'e polynomial is the formal series in the variable
$\lambda$ (with the convention that $\lambda^\infty=0$)
\begin{equation}
{  P}_\lambda(X,A,{\mathbb F})= {
P}_\lambda(X,A)=\sum_{q=0}^\infty\dim H^q(X,A,{\mathbb
F})\lambda^q.
\end{equation}
Moreover, we set
\begin{equation}
{  P}_\lambda(X,{\mathbb F}):={  P}_\lambda(X,\emptyset,{\mathbb
F}).
\end{equation}
\end{defin}
For the main properties of the formal series we refer to
\cite{BG94}. We use the Alexander-Spanier cohomology (see
\cite{Spa66} for an exhaustive treatment) to define the index of a
geodesic because we need its continuity property (recalled in
Theorem \ref{alex}) to prove Theorem \ref{mainpunt}.
\begin{defin}\label{ind}
Given $p,q \in M$, suppose that there exists a $\bar c $ s.t.
${\Omega}^{\bar c}$ contains a finite number of geodesics. Let
$c<\bar c$. Set, as usual $k_c$ the set of $c$-energy geodesic, we
define the index of $k_c$ as
\begin{equation}
i(k_c)={
P}_\lambda({\Omega}^{c+{\varepsilon}},{\Omega}^{c-{\varepsilon}}),
\end{equation}
for ${\varepsilon}$ sufficiently small.
\end{defin}
In order to prove that \ref{ind} is a good definition, we need to
show now that there exists $\bar{\varepsilon}$ s.t. $\forall
{\varepsilon}_1<{\varepsilon}_2\leq \bar{\varepsilon}$
\begin{displaymath}
{
P}_\lambda({\Omega}^{c+{\varepsilon}_1},{\Omega}^{c-{\varepsilon}_1})=
{
P}_\lambda({\Omega}^{c+{\varepsilon}_2},{\Omega}^{c-{\varepsilon}_2}).
\end{displaymath}

In order to prove it, we show that ${\Omega}^{c+{\varepsilon}_2}$
retracts on ${\Omega}^{c+{\varepsilon}_1}$ and that
${\Omega}^{c-{\varepsilon}_1}$ retracts on
${\Omega}^{c-{\varepsilon}_2}$. This is possible by the
deformation lemmas (\ref{deflem1} and \ref{deflem2}) cited
previously. Here we recall shortly the main tools of the proof.

We choose ${\varepsilon}$ s.t. $[c-{\varepsilon},c+{\varepsilon}]$
contains only $c$ as critical value. We start defining some
special subset of
${\Omega}^{c+{\varepsilon}_1}_{c+{\varepsilon}_2}$ (for
${\Omega}^{c-{\varepsilon}_2}_{c-{\varepsilon}_1}$ we will act in
the same way). Let
\begin{equation}
\Sigma=\{\gamma\in
{\Omega}^{c+{\varepsilon}_1}_{c+{\varepsilon}_2}, \text{ s.t.
Im}\gamma\cap V\neq\emptyset\}
\end{equation}
(we recall that $V$ is the set of vertexes). If $\gamma_i$ is a
geodesic in ${\Omega}^{c+{\varepsilon}}$, let
\begin{equation}
\Sigma_i=\{\gamma\in\Sigma\text{ s.t. }\gamma=\gamma_i \text{ up
to affine reparametrization }\}.
\end{equation}

\begin{lemma}\label{comp}
$\Sigma_i$ is compact for all $i$
\end{lemma}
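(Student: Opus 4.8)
The plan is to realize $\Sigma_i$ as the image of a compact, essentially finite-dimensional parameter set under a map into $W^{1,2}$ that is continuous for the strong topology, and then to invoke that the continuous image of a compact set is compact. First I would fix $\gamma_i$ and exploit that every element of $\Sigma_i$ has the \emph{same image} as $\gamma_i$, differing from it only by an affine reparametrization of each geodesic arc. Since $|\gamma_i'|$ is constant in $L^1$ (Definition \ref{defgeod}), the trajectory splits, along the complement of the vertex-time set, into geodesic arcs of fixed lengths $\ell_0,\dots,\ell_m$, and an element $\gamma\in\Sigma_i$ is completely determined by the times $0=s_0<s_1<\dots<s_m<s_{m+1}=1$ at which it reaches the successive vertices, because on each $[s_{j-1},s_j]$ it must traverse the $j$-th arc at constant speed. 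Writing $\Delta_j=s_j-s_{j-1}$, one finds
\begin{equation}
E(\gamma)=\sum_{j}\frac{\ell_j^2}{\Delta_j},\qquad \sum_j\Delta_j=1,\ \Delta_j>0,
\end{equation}
so $\Sigma_i$ is parametrized by the subset $\mathcal D$ of the open simplex cut out by the energy bounds $c+\varepsilon_1\le\sum_j \ell_j^2/\Delta_j\le c+\varepsilon_2$.

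Next I would prove that $\mathcal D$ is compact. It is closed, since the energy is a continuous function of $(\Delta_j)$ and the strip is closed. The decisive point is that $\mathcal D$ stays away from the degenerate boundary of the simplex: if some $\Delta_j\to 0$ with $\ell_j>0$, then $E=\sum_k \ell_k^2/\Delta_k\to+\infty$, contradicting the upper bound $c+\varepsilon_2<\infty$. Hence there is $\delta>0$ with $\Delta_j\ge\delta$ for every $j$ on $\mathcal D$, so $\mathcal D$ is a closed subset of the compact region $\{\Delta_j\ge\delta,\ \sum_j\Delta_j=1\}$, and is therefore compact.

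Finally I would transfer this to $\Sigma_i$. The map $\Phi\colon\mathcal D\to\Omega$ sending $(\Delta_j)$ to the corresponding reparametrization of $\gamma_i$ is continuous for the $W^{1,2}$ topology: on the $j$-th arc the derivative is $\gamma_i'$ rescaled by a factor proportional to $1/\Delta_j$, and these factors depend continuously on $(\Delta_j)$ with no concentration, thanks to $\Delta_j\ge\delta$; thus both $\gamma$ and $\|\gamma'\|_{L^2}$ vary continuously. Since $\Phi(\mathcal D)=\Sigma_i$, and the condition $\mathrm{Im}\,\gamma\cap V\neq\emptyset$ is automatically preserved (the image is that of $\gamma_i$, which meets $V$), $\Sigma_i$ is the continuous image of the compact set $\mathcal D$, hence compact.

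The main obstacle is the reduction to finitely many parameters: Definition \ref{defgeod} only forces the vertex-time set $T_{\gamma_i}$ to be closed with empty interior, so a priori $\gamma_i$ could meet $V$ along an infinite (Cantor-like) set, making $j$ range over a countable family of arcs and $\mathcal D$ infinite-dimensional. In that case I would keep the same energy formula but argue compactness directly: the constraint $\sum_j \ell_j^2/\Delta_j\le c+\varepsilon_2$ together with $\sum_j\ell_j=L<\infty$ forbids any concentration of time on short arcs (by Cauchy--Schwarz $L^2\le E$, with the attendant control on tails), which is exactly what is needed to extract a subsequence whose reparametrized paths converge in $W^{1,2}$ to a limit still lying in $\Sigma_i$. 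Verifying this tail estimate, and the continuity of $\Phi$ in the infinite case, is the only genuinely delicate step; for geodesics meeting $V$ finitely often, which is the situation relevant to the deformation Lemmas \ref{deflem1}--\ref{deflem2}, it is immediate.
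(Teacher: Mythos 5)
The paper does not actually prove Lemma \ref{comp}; it refers the reader to \cite[Lemma 3]{Ghi05}, so there is no in-paper argument to compare yours against. Your strategy --- parametrize $\Sigma_i$ by the vector of times $(\Delta_j)$ spent on the successive geodesic arcs of $\gamma_i$, compute $E=\sum_j\ell_j^2/\Delta_j$, deduce from the upper energy bound that $\Delta_j\geq\ell_j^2/(c+\varepsilon_2)>0$ for each $j$, and push the compactness of the resulting closed truncated simplex forward through the continuous reparametrization map $\Phi$ --- is correct and complete \emph{provided} $\gamma_i$ meets the vertex set only finitely many times, and is surely the intended line of argument.

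The gap is the one you flag yourself, and your fallback does not close it. Definition \ref{defgeod} only makes $T_{\gamma_i}$ closed with empty interior, so the number of arcs may a priori be countably infinite, and then the energy bounds alone do not yield compactness. Cauchy--Schwarz gives $\sum_{j>N}\Delta_j\geq\bigl(\sum_{j>N}\ell_j\bigr)^2/(c+\varepsilon_2)$, which is a \emph{lower} bound on the time spent on the tail arcs, whereas to extract a convergent subsequence you need a uniform \emph{upper} bound on $\sum_{j>N}\Delta_j$ (equi-summability of the time allocations). Nothing in the constraints $\sum_j\Delta_j=1$ and $c+\varepsilon_1\leq\sum_j\ell_j^2/\Delta_j\leq c+\varepsilon_2$ prevents a sequence in $\Sigma_i$ from transferring a fixed positive fraction of the total time onto arcs of smaller and smaller length (this only \emph{decreases} the corresponding energy contribution, and the lower energy bound can be maintained by compensating on a single long arc). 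The pointwise limit of such $(\Delta_j^{(n)})$ has $\sum_j\Delta_j<1$; the corresponding curves converge in $W^{1,2}$ to a path that sits at a vertex on an interval of positive length, which is at best a degenerate reparametrization of $\gamma_i$, so $\Sigma_i$ as you have parametrized it need not be closed. What is actually required here is a geometric input rather than a tail estimate: one must first show that a geodesic of energy at most $c+\varepsilon$ crosses the finite set $V$ only finitely many times (for instance because consecutive vertex passages are separated by arcs of length bounded below, $M$ being smooth away from $V$). That finiteness is the kind of fact established in \cite{Ghi05}, and your proof should either quote it or prove it before the finite-dimensional argument can start; once it is in place, the rest of your argument goes through.
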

\begin{proof}
The proof is not difficult and can be found in (\cite[Lemma
3]{Ghi05})
\end{proof}

By the compactness we can prove the following lemma.

\begin{lemma}[existence of retraction in $\bigcup_i\Sigma_i$]\label{retrsigma}
There exist $R\supset \bigcup_i\Sigma_i$, $\nu,\bar t\in\R^+$
and
\begin{displaymath}
\eta_R:R\times[0,\bar t\,]\rightarrow H^1(I,M)
\end{displaymath}
a continuous function s.t.

\begin{itemize}
\item $\eta_R(\beta,0)=\beta$ \item
$E(\eta_R(\beta,t))-E(\beta)<-\nu t$
\end{itemize}
for all $t\in [0,\bar t\,]$, $\beta\in R$.
\end{lemma}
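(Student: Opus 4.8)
The plan is to realize $\eta_R$ as a flow that reparametrizes each curve towards constant speed. The decisive observation is that a pure reparametrization leaves the image $\beta(I)\subset M$ unchanged and fixes the endpoints, so it automatically produces curves in $H^1(I,M)$ (indeed in $\Omega_{p,q}$) without ever having to displace a point across a vertex, where $M$ is only $C^0$. By Definition \ref{defgeod} each geodesic $\gamma_i$ has globally constant speed, and among all reparametrizations with the same image and the same endpoints this constant-speed parametrization is the unique energy minimizer (Cauchy--Schwarz together with strict convexity of $s\mapsto s^2$). Now every $\beta\in\Sigma_i$ has the same image as $\gamma_i$: if $E(\beta)$ equalled $E(\gamma_i)$ then $\beta$ would itself be constant-speed, hence equal to $\gamma_i$ and thus a geodesic lying in the open strip, contradicting the absence of critical values there. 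Therefore $E(\beta)>E(\gamma_i)$, and reparametrizing $\beta$ towards constant speed strictly lowers its energy.

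Next I would make this quantitative. Writing the deformation as $\eta_R(\beta,t)=\beta\circ\phi_t$ for a flow $\phi_t$ of changes of variable fixing $0$ and $1$ and driving the speed of $\beta$ towards its mean, a direct differentiation shows that $\tfrac{d}{dt}E(\eta_R(\beta,t))$ equals minus a quantity measuring the deviation of the speed from being constant, which vanishes exactly on the constant-speed curves. On each $\Sigma_i$ this deviation is a continuous, strictly positive function, so by the compactness of $\Sigma_i$ (Lemma \ref{comp}) and the finiteness of the set of geodesics it is bounded below by a single constant on $\bigcup_i\Sigma_i$. By continuity of the rate in $(\beta,t)$ I can fix $\bar t$ small enough that this bound survives on $\bigcup_i\Sigma_i\times[0,\bar t\,]$, which furnishes a uniform $\nu>0$ with $E(\eta_R(\beta,t))-E(\beta)<-\nu t$ and keeps the reparametrized curves admissible; the identity $\eta_R(\beta,0)=\beta$ is built into $\phi_0=\mathrm{id}$.

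To obtain the neighborhood $R$ I would note that the speed-equalizing flow is defined, $H^1$-continuous, and energy-decreasing for every curve whose speed is not a.e.\ constant, with a rate depending continuously on $\beta$. Hence the strict decrease and the uniform rate survive, after possibly halving $\nu$ and shrinking $\bar t$, on an open neighborhood $R\supset\bigcup_i\Sigma_i$, and $\eta_R$ inherits continuity on $R\times[0,\bar t\,]$ from the continuous dependence of $\phi_t$ on the integrated-speed data of $\beta$.

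The step I expect to be the main obstacle is the continuity of $\eta_R$ near the vertices. The natural ``segment-wise'' description of $\Sigma_i$ --- constant speed on each arc between consecutive hits of $V$, equalized by moving parameter-time from the fast arcs to the slow ones --- is combinatorially unstable: the set $\beta^{-1}(V)$ jumps as $\beta$ varies in $R$ (a nearby curve may meet a vertex in more, fewer, or differently placed points, or merely graze it), so a flow defined through that decomposition is not manifestly continuous. I would circumvent this by defining the flow globally through the integrated speed $s\mapsto\int_0^s\|\beta'\|^2$, which refers only to $\beta$ and never to the location of the vertex hits, using the segment picture solely to certify the strict, uniformly bounded energy decrease on the compact model $\bigcup_i\Sigma_i$. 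Verifying that this global reparametrization remains $H^1$-continuous uniformly as curves approach or touch the merely $C^0$ vertices --- where the speed may degenerate --- and never leaves $M$, is the delicate technical core; it is exactly here that the compactness supplied by Lemma \ref{comp} makes the needed uniform bounds available, in the same spirit as the deformation Lemmas \ref{deflem1} and \ref{deflem2}.
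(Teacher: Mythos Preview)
Your approach is genuinely different from the paper's. The paper gives no self-contained argument here: it only records that the construction is ``quite technical'' and ``based on a modification of Degiovanni and Marzocchi techniques,'' referring to \cite[Lemma~4]{Ghi05} and implicitly to the metric/weak-slope deformation machinery of \cite{CDM93,DM94}. That machinery builds $\eta_R$ out of local displacements glued by partitions of unity, and never uses the special fact that elements of $\Sigma_i$ are reparametrizations of $\gamma_i$. Your reparametrization idea is more elementary and, \emph{restricted to $\bigcup_i\Sigma_i$}, entirely sound: since $E(\gamma_i)\le c<c+\varepsilon_1\le E(\beta)$ for every $\beta\in\Sigma_i$, driving the piecewise-constant speed towards its mean strictly lowers $E$, and Lemma~\ref{comp} makes the gap uniform. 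This is arguably cleaner than invoking weak-slope theory on the compact core.

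The gap is in the passage to the open neighbourhood $R$. A curve $\beta\in R\smallsetminus\bigcup_i\Sigma_i$ is an \emph{arbitrary} $H^1$ curve, not a reparametrization of any $\gamma_i$, and your global flow $\beta\mapsto\beta\circ\phi_t^\beta$ via the integrated speed runs into two concrete obstructions. First, the infinitesimal rate you invoke is (up to normalization) $\int_0^1|\beta'|^4-\bigl(\int_0^1|\beta'|^2\bigr)^2$, which requires $\beta'\in L^4$; $H^1$-closeness to $\Sigma_i$ controls only $\|\beta'\|_{L^2}$ and gives no $L^4$ bound, so the rate can be infinite or fail to be continuous. Second, and more seriously, the map $(\beta,\phi)\mapsto\beta\circ\phi$ is not jointly continuous from $H^1\times(\text{changes of variable})$ into $H^1$, and $\sigma_\beta^{-1}$ is ill-behaved where $|\beta'|$ is small --- which can occur on sets of positive measure for $\beta$ merely $H^1$-close to a piecewise-constant-speed curve. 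These are precisely the difficulties that the Degiovanni--Marzocchi framework sidesteps by working directly in the metric space $(\Omega,d)$ without ever composing with $\beta$-dependent diffeomorphisms; that robustness is what the paper's reference to \cite{Ghi05} is buying. Your outline would need a genuinely different mechanism on $R\smallsetminus\bigcup_i\Sigma_i$ (not pure reparametrization) to close.
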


\begin{proof}
The proof is quite technical, and is based on a modification of
Degiovanni and Marzocchi techniques. For the details we refer to
(\cite[Lemma 4]{Ghi05})
\end{proof}

We can prove also a deformation result that is in some sense the
complementary of the previous lemma.

\begin{lemma}\label{retru}
For any $U\supset\bigcup_i\Sigma_i$ there exist $\bar
t,\nu\in\R^+$ and a continuous functional
\begin{displaymath}
\eta_U:
{\Omega}^{c+{\varepsilon}_1}_{c+{\varepsilon}_2}\smallsetminus
U\times [0,\bar t\,] \rightarrow
{\Omega}^{c+{\varepsilon}_1}_{c+{\varepsilon}_2}
\end{displaymath}
such that
\begin{itemize}
\item $\eta_U(\cdot,0)=\text{Id}$ \item
$E(\eta_U(\beta,t))-E(\beta)\leq-\nu t$
\end{itemize}
for all $t\in[0,\bar t\,]$, for all $\beta \in
{\Omega}^{c+{\varepsilon}_1}_{c+{\varepsilon}_2}\smallsetminus U$
\end{lemma}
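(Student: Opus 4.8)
The plan is to construct a pseudo-gradient-type flow on the region $\Omega^{c+\varepsilon_1}_{c+\varepsilon_2}\smallsetminus U$ that strictly decreases the energy, exploiting the fact that this region stays uniformly bounded away from the singular set $V$. The key observation is that since $U$ is an open neighborhood of $\bigcup_i\Sigma_i$, which by Lemma \ref{comp} contains all the compact sets of curves touching a vertex, every curve $\beta$ in the complement $\Omega^{c+\varepsilon_1}_{c+\varepsilon_2}\smallsetminus U$ has image bounded away from $V$ by some uniform distance $\delta>0$. On such curves the energy functional $E$ behaves like the energy on a genuinely smooth manifold, so the ordinary smooth deformation machinery applies.

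First I would establish the uniform separation: I claim there is $\delta>0$ such that $\mathrm{dist}(\mathrm{Im}\,\beta, V)\geq\delta$ for all $\beta\in\Omega^{c+\varepsilon_1}_{c+\varepsilon_2}\smallsetminus U$. This follows by a compactness/contradiction argument — if no such $\delta$ existed, a sequence of curves in the complement would approach a vertex, and by the $W^{1,2}$-weak compactness of energy sublevels (used already in the proof of Theorem \ref{min}) a subsequence would converge to a limit lying in $\bigcup_i\Sigma_i\subset U$, contradicting membership in the complement. Next, on the open subset $M_\delta=\{x\in M:\mathrm{dist}(x,V)>\delta/2\}$, which is a smooth manifold, I would build a locally Lipschitz pseudo-gradient vector field $X$ for $E$ satisfying $\langle dE(\beta),X(\beta)\rangle\geq\mu>0$ with $\|X\|$ controlled, uniformly on the relevant region (here the Palais–Smale condition and the absence of critical values in $[c+\varepsilon_2,c+\varepsilon_1]$ guarantee a uniform lower bound $\mu$ on $\|dE\|$). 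Integrating the negative pseudo-gradient flow and truncating at a uniform time $\bar t$ small enough that no trajectory starting outside $U$ enters the $\delta/2$-collar of $V$ yields the desired $\eta_U$, and the linear energy decay $E(\eta_U(\beta,t))-E(\beta)\leq-\nu t$ comes from the uniform lower bound with $\nu$ proportional to $\mu$.

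The properties to verify are then routine: $\eta_U(\cdot,0)=\mathrm{Id}$ holds by the initial condition of the flow, continuity of $\eta_U$ follows from continuous dependence of ODE solutions on initial data (the field $X$ being locally Lipschitz), and the energy estimate is the integrated form of the pseudo-gradient inequality. One must also check that the flow remains inside $\Omega^{c+\varepsilon_1}_{c+\varepsilon_2}$: the energy only decreases so the upper bound $c+\varepsilon_1$ is preserved, while the choice of $\bar t$ together with the decay rate keeps the energy above $c+\varepsilon_2$ for the allotted time (shrinking $\bar t$ if necessary).

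I expect the main obstacle to be the uniform separation from $V$, i.e. producing a single $\delta$ and a single $\mu$ that work simultaneously for every $\beta$ in the complement of $U$. The subtlety is that the complement $\Omega^{c+\varepsilon_1}_{c+\varepsilon_2}\smallsetminus U$ is not itself compact in a strong topology, so the uniformity cannot be read off directly; it has to be extracted from the weak compactness of energy sublevels and the finiteness of the geodesic set below $\bar c$. Once this uniform regularization of the problem onto the smooth piece $M_\delta$ is in place, the construction reduces to the classical deformation argument on a smooth Riemannian path space, and the remaining verifications are standard. This lemma is, in effect, the counterpart to Lemma \ref{retrsigma}: together they will let us push $\Omega^{c+\varepsilon_2}$ down onto $\Omega^{c+\varepsilon_1}$ by handling the singular part via $\eta_R$ and the regular part via $\eta_U$.
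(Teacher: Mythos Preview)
There is a genuine gap in your separation claim. You assert that every $\beta\in\Omega^{c+\varepsilon_1}_{c+\varepsilon_2}\smallsetminus U$ has image uniformly bounded away from $V$, arguing that a sequence violating this would subconverge to a limit in $\bigcup_i\Sigma_i$. But recall the definitions: $\Sigma$ is the set of \emph{all} curves in the strip whose image meets $V$, whereas each $\Sigma_i$ consists only of those curves in $\Sigma$ that are affine reparametrizations of the geodesic $\gamma_i$. Lemma~\ref{comp} says that each $\Sigma_i$ is compact; it does \emph{not} say that $\bigcup_i\Sigma_i=\Sigma$. In general $\Sigma\smallsetminus\bigcup_i\Sigma_i$ is nonempty, so a neighborhood $U$ of $\bigcup_i\Sigma_i$ need not contain all of $\Sigma$, and the complement $\Omega^{c+\varepsilon_1}_{c+\varepsilon_2}\smallsetminus U$ can perfectly well contain curves that pass through a vertex. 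Your contradiction argument therefore breaks down: the limit curve would lie in $\Sigma$, but there is no reason for it to be a reparametrized geodesic, hence no reason for it to lie in $U$.

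This is exactly the difficulty the paper's proof is designed to address. The argument in \cite{Ghi05} splits $\Omega^{c+\varepsilon_1}_{c+\varepsilon_2}\smallsetminus U$ into two pieces. Outside $\Sigma$ the energy is genuinely smooth and your pseudo-gradient construction goes through. The delicate part is $\Sigma\smallsetminus U$: here $E$ is not differentiable, yet because such a $\gamma$ is \emph{not} (a reparametrization of) a geodesic one can still exhibit a specific variation $w_\gamma$ along which the directional derivative $dE(\gamma)[w_\gamma]$ exists and is strictly negative. Patching these directional descent vectors with the pseudo-gradient field away from $\Sigma$ is what produces $\eta_U$. Your proposal skips this step entirely by assuming the set $\Sigma\smallsetminus U$ is empty, which it is not.
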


\begin{proof}
We use that for every $\gamma \in \Sigma \smallsetminus U$ we can
find a vector field $w_\gamma$ s.t. $dE(\gamma)[w_\gamma]$ exists,
although the energy is not smooth. Moreover, outside $\Sigma$ the
energy can be differentiated, so we can use pseudo gradient vector
field construction and we find the wanted retraction (for all
details see \cite[Lemma 5]{Ghi05})
\end{proof}

From Lemma \ref{retrsigma} and Lemma \ref{retru} we get the
following result, which states that Definition \ref{ind} is a good
definition.

\begin{teo}
\label{main} Let $M$ be a conical manifold, let $p,q \in M$ be
s.t. $E$ admits only a finite set of critical points under a
certain level $\bar c$. Let $c<\bar c$ be a critical level. There
exists an ${\varepsilon}>0$ s.t., for any
${\varepsilon}_1,{\varepsilon}_2<{\varepsilon}$, we have that
\begin{equation}
{
P}_\lambda({\Omega}^{c+{\varepsilon}_1},{\Omega}^{c-{\varepsilon}_1})=
{
P}_\lambda({\Omega}^{c+{\varepsilon}_2},{\Omega}^{c-{\varepsilon}_2}).
\end{equation}
\end{teo}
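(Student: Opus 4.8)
The plan is to reduce the equality of Poincaré polynomials to two deformation retractions, one for the upper sublevels and one for the lower sublevels, and then to pass to cohomology through the long exact sequences of triples. First I fix $\varepsilon>0$ small enough that $c$ is the only critical value in $[c-\varepsilon,c+\varepsilon]$ and that $c+\varepsilon<\bar c$; such an $\varepsilon$ exists because the geodesics below $\bar c$ are finite in number, so their energies form a finite set and $c$ is isolated among them. Since the statement is symmetric in $\varepsilon_1,\varepsilon_2$, I may assume $\varepsilon_1<\varepsilon_2$, so that
\begin{displaymath}
\Omega^{c-\varepsilon_2}\subseteq\Omega^{c-\varepsilon_1}\subseteq\Omega^{c+\varepsilon_1}\subseteq\Omega^{c+\varepsilon_2}.
\end{displaymath}
It then suffices to establish two claims: (i) $\Omega^{c+\varepsilon_1}$ is a deformation retract of $\Omega^{c+\varepsilon_2}$; and (ii) $\Omega^{c-\varepsilon_2}$ is a deformation retract of $\Omega^{c-\varepsilon_1}$.

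Both claims follow from the First Deformation Lemma \ref{deflem1}. For (i) I apply it to the strip $[a,b]=[c+\varepsilon_1,c+\varepsilon_2]$, which by the choice of $\varepsilon$ contains no critical value and satisfies $b=c+\varepsilon_2<\bar c$; this yields that $\Omega^{c+\varepsilon_1}$ is a deformation retract of $\Omega^{c+\varepsilon_2}$. For (ii) I apply it to $[a,b]=[c-\varepsilon_2,c-\varepsilon_1]$, again critical-value-free, obtaining that $\Omega^{c-\varepsilon_2}$ is a deformation retract of $\Omega^{c-\varepsilon_1}$. The substance of these retractions, and the reason they exist although $E$ is nonsmooth, is the construction recalled in Lemma \ref{retrsigma} and Lemma \ref{retru}: near the set $\bigcup_i\Sigma_i$ of curves meeting a vertex, where $E$ fails to be differentiable, one uses the energy-decreasing deformation $\eta_R$; on the complement of a neighborhood of $\bigcup_i\Sigma_i$, where $E$ is smooth, one uses the pseudo-gradient deformation $\eta_U$; patching the two by a partition of unity produces a global flow whose energy decreases at a uniform positive rate across the strip, which drives the larger sublevel onto the smaller one in finite time while fixing the smaller one pointwise.

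With the two retractions in hand the conclusion is formal. Because the deformation in (i) is a strong deformation retraction of $\Omega^{c+\varepsilon_2}$ onto $\Omega^{c+\varepsilon_1}$, it fixes $\Omega^{c-\varepsilon_1}\subseteq\Omega^{c+\varepsilon_1}$ pointwise, hence is a retraction of pairs $(\Omega^{c+\varepsilon_2},\Omega^{c-\varepsilon_1})\to(\Omega^{c+\varepsilon_1},\Omega^{c-\varepsilon_1})$, and homotopy invariance of relative Alexander--Spanier cohomology gives
\begin{displaymath}
P_\lambda(\Omega^{c+\varepsilon_2},\Omega^{c-\varepsilon_1})=P_\lambda(\Omega^{c+\varepsilon_1},\Omega^{c-\varepsilon_1}).
\end{displaymath}
For the lower level, claim (ii) makes the inclusion $\Omega^{c-\varepsilon_2}\hookrightarrow\Omega^{c-\varepsilon_1}$ a homotopy equivalence, so $H^q(\Omega^{c-\varepsilon_1},\Omega^{c-\varepsilon_2})=0$ for all $q$; inserting this into the long exact cohomology sequence of the triple $(\Omega^{c+\varepsilon_2},\Omega^{c-\varepsilon_1},\Omega^{c-\varepsilon_2})$ forces the restriction maps $H^q(\Omega^{c+\varepsilon_2},\Omega^{c-\varepsilon_1})\to H^q(\Omega^{c+\varepsilon_2},\Omega^{c-\varepsilon_2})$ to be isomorphisms, whence
\begin{displaymath}
P_\lambda(\Omega^{c+\varepsilon_2},\Omega^{c-\varepsilon_1})=P_\lambda(\Omega^{c+\varepsilon_2},\Omega^{c-\varepsilon_2}).
\end{displaymath}
Chaining the two displays identifies $P_\lambda(\Omega^{c+\varepsilon_1},\Omega^{c-\varepsilon_1})$ with $P_\lambda(\Omega^{c+\varepsilon_2},\Omega^{c-\varepsilon_2})$, which is the assertion.

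The main obstacle lies entirely in claims (i)--(ii), namely in producing an energy-decreasing deformation in the nonsmooth setting: the usual pseudo-gradient vector field does not exist along curves passing through a vertex, and the whole purpose of Lemmas \ref{comp}, \ref{retrsigma} and \ref{retru} is to replace it there by the compactness of each $\Sigma_i$ together with the explicit deformation $\eta_R$, and then to glue this with the smooth flow $\eta_U$ without losing the uniform energy decay needed to cross the strip. Once that deformation is available, the remaining cohomological bookkeeping is routine, relying only on homotopy invariance and the exact sequence of a triple for Alexander--Spanier cohomology.
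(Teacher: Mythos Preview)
Your proof is correct and follows essentially the same approach as the paper: both reduce the statement to the two retractions $\Omega^{c+\varepsilon_2}\simeq\Omega^{c+\varepsilon_1}$ and $\Omega^{c-\varepsilon_1}\simeq\Omega^{c-\varepsilon_2}$ obtained by patching the deformations $\eta_R$ and $\eta_U$ on the noncritical strips. The only presentational difference is that you invoke Lemma~\ref{deflem1} as a black box and then spell out the cohomological passage via the long exact sequence of the triple, whereas the paper rewrites the explicit alternating iteration with cutoffs $\theta_1,\theta_2$ and maps $\mu_1,\mu_2$ and leaves the cohomological consequence implicit.
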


\begin{proof}
Because there are a finite number of critical points, we can
choose ${\varepsilon}$ s.t. $[c-{\varepsilon},c+{\varepsilon}]$
contains only $c$ as critical value. Chosen
${\varepsilon}_1<{\varepsilon}_2<{\varepsilon}$, we want to
retract ${\Omega}^{c+{\varepsilon}_2}$ on
${\Omega}^{c+{\varepsilon}_1}$ and ${\Omega}^{c-{\varepsilon}_1}$
on ${\Omega}^{c-{\varepsilon}_2}$. Defined as before $\Sigma$ and
$\Sigma_i$ in ${\Omega}^{c+{\varepsilon}_2}_{c+{\varepsilon}_1}$,
we note that for $i\neq j$ then $\Sigma_i\cap\Sigma_j=\emptyset$,
because the geodesics $\gamma_i$ and $\gamma_j$ are distinct. Set
$N$ the number of critical points under the level
$c+{\varepsilon}$, we can find a neighborhood $R\supset
\bigcup\limits_i\Sigma_i$ and a retraction $\eta_{R}$ as in Lemma
\ref{retrsigma}. Furthermore, for every
$U\supset\bigcup\limits_{i=0}^N\Sigma_i$ there exists a retraction
$\eta_U$ on
${\Omega}^{c+{\varepsilon}_2}_{c+{\varepsilon}_1}\smallsetminus U$
in analogy with Lemma \ref{retru}. We choose $U$ s.t. there exists
$V$ a neighborhood of $\bigcup\limits_i\Sigma_i$ with
\begin{displaymath}
\bigcup_i\Sigma_i\subsetneq U\subsetneq V\subsetneq R.
\end{displaymath}
For the sake of simplicity we will suppose that $\eta_U$ and
$\eta_R$
 are defined for $0\leq t \leq 1$ and that $\nu$
is the same for both of them. Let $\theta_1:
{\Omega}^{c+{\varepsilon}_2}\rightarrow[0,1]$ be a continuous map
s.t.
\begin{displaymath}
\theta_{1}|_U\equiv0;
\end{displaymath}
\begin{displaymath}
\theta_{1}|_{{\Omega}^{c+{\varepsilon}_1}\smallsetminus V}\equiv1.
\end{displaymath}
Then we define a continuous map
\begin{equation}
\mu_1:{\Omega}^{c+{\varepsilon}_2}\times[0,1]\rightarrow{\Omega}^{c+{\varepsilon}_2},
\end{equation}
\begin{equation}
\mu_1(\beta,t)=\eta_U(\beta,\theta_1(\beta)t);
\end{equation}
we know that $E(\mu_1(\beta,t))-E(\beta)\leq-\nu
t\theta_1(\beta)$, so
\begin{displaymath}
\mu_1({\Omega}^{c+{\varepsilon}_2},1)\subset
V\cup{\Omega}^{c+{\varepsilon}_2-\nu},
\end{displaymath}
in fact if $\mu_1(\beta,t)\notin V$ for all $t$, then
$E(\mu_1(\beta,t))-E(\beta)\leq-\nu t$, so
$\mu_1(\beta,1)\in{\Omega}^{c+{\varepsilon}_2-\nu}$.

By $\mu_1$ we have retracted ${\Omega}^{c+{\varepsilon}_2}$ on
${\Omega}^{c+{\varepsilon}_2-\nu}\cup V$; now we define a
continuous map
$\theta_2:{\Omega}^{c+{\varepsilon}_2}\rightarrow[0,1]$ s.t.
\begin{displaymath}
\theta_{1}|_{{\Omega}^{c+{\varepsilon}_2}_{c+{\varepsilon}_2-\nu/2}}\equiv1;
\end{displaymath}
\begin{displaymath}
\theta_{1}|_{{\Omega}^{c+{\varepsilon}_1-\nu}}\equiv0.
\end{displaymath}

Then set
\begin{equation}
\mu_2:V\cup{\Omega}^{c+{\varepsilon}_2-\nu}\times[0,1]\rightarrow{\Omega}^{c+{\varepsilon}_2},
\end{equation}
\begin{equation}
\mu_2(\beta,t)=\eta_R(\beta,\theta_2(\beta)t);
\end{equation}
$\mu_2$ is a continuous map that retracts
$V\cup{\Omega}^{c+{\varepsilon}_2-\nu}$ on
${\Omega}^{c+{\varepsilon}_2-\nu/2}$. By iterating this algorithm
we can retract continuously ${\Omega}^{c+{\varepsilon}_2}$ on
${\Omega}^{c+{\varepsilon}_1}$.

The retraction of ${\Omega}^{c-{\varepsilon}_1}$ on
${\Omega}^{c-{\varepsilon}_2}$ is obtained in the same way.
\end{proof}
\begin{rem}
The hypothesis of finiteness of critical points seems quite
restrictive. Indeed, because we have a finite number of vertexes,
in many concrete examples this assumption is easily verified, as
shown in the applications.
\end{rem}

To conclude this section we show some properties of ${
P}_\lambda$ useful in the next.
\begin{lemma}
\label{pl} (see \cite[lemma 4.2,(v)]{Ben91}). Let $X$ a metric
space, $A,B$ two closed subspace of $X$ s.t. $B\subset A\subset
X$; then there exist a formal series ${  Q}_\lambda$ s.t.
\begin{equation}
\label{plambda} {  P}_\lambda(X,A)+{  P}_\lambda(A,B)= {
P}_\lambda(X,B)+(1+\lambda){  Q}_\lambda.
\end{equation}
\end{lemma}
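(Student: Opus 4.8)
The plan is to read the relation straight off the long exact cohomology sequence of the triple $(X,A,B)$, using nothing beyond exactness and the rank--nullity theorem in each degree. Since $B\subset A\subset X$ are closed, Alexander--Spanier cohomology furnishes the long exact sequence
\[
\cdots \to H^q(X,A)\xrightarrow{i^*} H^q(X,B)\xrightarrow{j^*} H^q(A,B)\xrightarrow{\delta_q} H^{q+1}(X,A)\to\cdots
\]
(see \cite{Spa66}). Writing $\alpha_q=\dim H^q(X,A)$, $\beta_q=\dim H^q(X,B)$, $\gamma_q=\dim H^q(A,B)$, and letting $s_q,t_q,r_q$ denote the ranks of $i^*,j^*,\delta_q$ in degree $q$, exactness identifies each kernel with the preceding image, so rank--nullity at the three nodes of degree $q$ yields
\[
\alpha_q=r_{q-1}+s_q,\qquad \beta_q=s_q+t_q,\qquad \gamma_q=t_q+r_q,
\]
with the convention $r_{-1}=0$.

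The heart of the argument is then a one-line cancellation. Forming $\alpha_q+\gamma_q-\beta_q$, the ranks $s_q$ and $t_q$ of the two ``internal'' maps drop out and only the connecting ranks survive:
\[
\alpha_q+\gamma_q-\beta_q=r_{q-1}+r_q.
\]
Multiplying by $\lambda^q$ and summing gives $P_\lambda(X,A)+P_\lambda(A,B)-P_\lambda(X,B)=\sum_q r_{q-1}\lambda^q+\sum_q r_q\lambda^q$. Setting $Q_\lambda:=\sum_q r_q\lambda^q$ and performing the index shift $q\mapsto q-1$ in the first sum, which contributes an extra factor $\lambda$, the right-hand side collapses to $\lambda Q_\lambda+Q_\lambda=(1+\lambda)Q_\lambda$. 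This is exactly \eqref{plambda}, and since every $r_q$ is the rank of a linear map, the coefficients of $Q_\lambda$ lie in $\N\cup\{+\infty\}$ as required.

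The decisive observation --- the one place a genuine idea enters --- is that each connecting map $\delta_q$ contributes its rank twice: once through $\gamma_q$ in degree $q$, where $\delta_q$ leaves $H^q(A,B)$, and once through $\alpha_{q+1}$ in degree $q+1$, where its image sits inside $H^{q+1}(X,A)$. It is precisely this degree shift that manufactures the factor $(1+\lambda)$ rather than a shapeless error term. I do not expect a real obstacle here; the only delicate points are bookkeeping ones. First, if some cohomology group is infinite-dimensional the identity $\alpha_q+\gamma_q=\beta_q+r_{q-1}+r_q$ must be read as an equation in $\N\cup\{+\infty\}$, consistent with the convention $\lambda^\infty=0$ adopted in Definition \ref{defpol}. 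Second, one must invoke that Alexander--Spanier cohomology really does supply the long exact sequence of a closed triple, a standard fact that is exactly the structural property for which this cohomology theory was selected.
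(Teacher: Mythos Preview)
Your proof is correct and follows the same route as the paper: both read the identity off the long exact cohomology sequence of the triple $(X,A,B)$ via rank--nullity, then regroup the connecting contributions to produce the $(1+\lambda)$ factor. The only cosmetic difference is that you track the ranks $s_q,t_q,r_q$ of the maps while the paper tracks the kernel dimensions $a_q,b_q,c_q$; exactness makes these interchangeable, and your $Q_\lambda=\sum_q r_q\lambda^q$ (built from the ranks of the connecting homomorphisms) plays the same role as the paper's $Q_\lambda$.
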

\begin{proof}
We consider the exact sequence relative to the triple $X,A,B$:
\begin{equation}
\label{esatta} \ldots \stackrel{\delta^*_{q-1}}{\longrightarrow}
H^q(X,A) \stackrel{i^*_q}{\longrightarrow}H^q(X,B)
\stackrel{j^*_q}{\longrightarrow}H^q(A,B)
\stackrel{\delta^*_q}{\longrightarrow}\ldots
\end{equation}
and we set
\begin{displaymath}
\begin{array}c
a_q=\dim(\ker i^*_q),\\
b_q=\dim(\ker j^*_q),\\
c_q=\dim(\ker \delta^*_q).
\end{array}
\end{displaymath}
By exactness of (\ref{esatta}) we have that
\begin{displaymath}
\begin{array}{rcl}
\dim H^q(X,A)&=&a_q+c_{q-1};\\
\dim H^q(X,B)&=&a_q+b_q;\\
\dim H^q(A,B)&=&b_q+c_q,
\end{array}
\end{displaymath}
thus
\begin{displaymath}
\begin{array}{rcl}
{  P}_\lambda(X,A)&=&\sum_{q=0}^\infty(a_q+c_{q-1})\lambda^q;\\
{  P}_\lambda(X,B)&=&\sum_{q=0}^\infty(a_q+b_q)\lambda^q;\\
{  P}_\lambda(A,B)&=&\sum_{q=0}^\infty(b_q+c_q)\lambda^q.
\end{array}
\end{displaymath}
So
\begin{equation}
{  P}_\lambda(X,A)+{  P}_\lambda(A,B)= {
P}_\lambda(X,B)+\sum_{q=0}^\infty(c_q+c_{q-1})\lambda^q,
\end{equation}
but we can write
\begin{equation}
\sum_{q=0}^\infty(c_q+c_{q-1})\lambda^q=
(1+\lambda)\sum_{q=0}^\infty c_q\lambda^q,
\end{equation}
and then, by setting ${  Q}_\lambda=\sum_{q=0}^\infty
c_q\lambda^q$, we conclude the proof.
\end{proof}
\begin{lemma}
\label{pm} (see \cite[theorem II.3.5]{Ben95}) Let $M$ a conical
manifold. If $M$ is topologically trivial, then also ${\Omega}$ is
topologically trivial, thus
\begin{displaymath}
{  P}_\lambda({\Omega})=1.
\end{displaymath}
\end{lemma}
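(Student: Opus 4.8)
The plan is to deduce topological triviality of the path space $\Omega = \Omega_{p,q}$ from that of $M$, and then read off $P_\lambda(\Omega)=1$. Indeed, once $\Omega$ is known to be contractible it has the Alexander--Spanier cohomology of a point, so $H^0(\Omega,\mathbb{F})=\mathbb{F}$ and $H^q(\Omega,\mathbb{F})=0$ for $q\ge 1$, and Definition \ref{defpol} gives $P_\lambda(\Omega)=1$ at once. Thus the entire content of the lemma is the implication ``$M$ contractible $\Rightarrow$ $\Omega_{p,q}$ contractible''.

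To produce the contraction explicitly, I would start from a contraction of $M$, i.e.\ a continuous $F:M\times[0,1]\to M$ with $F(\cdot,0)=\mathrm{id}_M$ and $F(\cdot,1)\equiv x_0$. Given $\gamma\in\Omega_{p,q}$ and $s\in[0,1]$, I define $\Phi(\gamma,s)$ as the concatenation, reparametrized on $[0,1]$, of three arcs: the head arc $u\mapsto F(p,su)$ from $p$ to $F(p,s)$, the middle arc $t\mapsto F(\gamma(t),s)$ from $F(p,s)$ to $F(q,s)$, and the tail arc $u\mapsto F(q,s(1-u))$ from $F(q,s)$ back to $q$. By construction every $\Phi(\gamma,s)$ starts at $p$, ends at $q$, and has image in $M$, so $\Phi$ takes values in $\Omega_{p,q}$. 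At $s=0$ the head and tail arcs degenerate to the constants at $p$ and $q$ while the middle arc is $\gamma$, so, choosing the reparametrization to assign them zero length at $s=0$, we get $\Phi(\cdot,0)=\mathrm{id}$. At $s=1$ the middle arc collapses to the constant $x_0$, so $\Phi(\gamma,1)$ is the single fixed path ``$u\mapsto F(p,u)$ followed by $u\mapsto F(q,1-u)$'', independent of $\gamma$. Hence $\Phi$ contracts $\Omega_{p,q}$ to one point.

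The routine steps above hide the one genuine obstacle, which is functional-analytic rather than topological: I must guarantee that $\Phi$ lands in $\Omega$ and is continuous for the $W^{1,2}$ topology. Concatenation and reparametrization are harmless, but the middle arc requires composing the $W^{1,2}$ curve $\gamma$ with $F(\cdot,s)$, and this preserves Sobolev regularity only if $F$ can be chosen (locally) Lipschitz. Away from the vertices this is automatic by smoothing $F$; the delicate point is precisely the behaviour of the contraction near the vertex set $V$, which is where the conical structure intervenes. Granting that the contraction of $M$ may be taken to act continuously on $W^{1,2}$ --- equivalently, working up to the homotopy equivalence between the $W^{1,2}$- and $C^0$-topologies on the path space, as in the smooth theory --- the construction closes, and this is the sense in which the statement parallels \cite[Theorem II.3.5]{Ben95}. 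A less hands-on route reaches the same conclusion through the endpoint-evaluation fibration $P_p\to M$, whose total space of paths issuing from $p$ contracts via $G(\gamma,s)(t)=\gamma(st)$ and whose fibre over $q$ is $\Omega_{p,q}$, so that the long exact homotopy sequence forces $\pi_n(\Omega_{p,q})\cong\pi_{n+1}(M)=0$ for all $n$.
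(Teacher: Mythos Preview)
Your argument is correct and, in fact, vastly more detailed than the paper's own proof, which consists of the single word ``Obvious.'' (with the citation to \cite[Theorem II.3.5]{Ben95} carrying the load). So there is no divergence of approach to speak of: you have simply supplied the content that the paper delegates to the reference. Your explicit contraction of $\Omega_{p,q}$ via head--middle--tail concatenation is the standard construction, and your alternative via the path fibration $P_p\to M$ is equally valid and perhaps cleaner.

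The one point you flag yourself---that composing a $W^{1,2}$ curve with a merely continuous contraction $F(\cdot,s)$ need not stay in $W^{1,2}$, particularly near the vertices---is a genuine technical wrinkle, and it is the only place where the conical structure could interfere. Your remedy (passing through the homotopy equivalence between the $W^{1,2}$ and $C^0$ topologies on the path space, as in the smooth theory) is the right way to handle it, and is implicitly what the citation to Benci covers. Since the paper itself does not engage with this subtlety at all, your treatment is already more careful than what is being asked of you.
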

\begin{proof}
Obvious.
\end{proof}

By lemma \ref{pl} we can formulate the Morse relations for
critical levels.

\begin{teo}
\label{mainval} Let $M$ a conical manifold; let $p,q\in M$ s.t.
$E$ admits only a finite number of critical points for every
sublevel ${\Omega}^c$

Then, if $a$ and $b$ are regular levels there exists a formal
series ${  Q}_\lambda$ such that {\em
\begin{equation}
\label{morseval} \sum_{c\text{ critical in }(a,b)}i(k_c)= {
P}_\lambda({\Omega}^a,{\Omega}^b)+(1+\lambda) {  Q}_\lambda.
\end{equation}
} Moreover, if we consider the whole space ${\Omega}$ there exists
a ${  Q}_\lambda$ such that {\em
\begin{equation}
\label{morseval2} \sum_{c\text{ critical}}i(k_c)= {
P}_\lambda({\Omega})+(1+\lambda) {  Q}_\lambda.
\end{equation}
}
\end{teo}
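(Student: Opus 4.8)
The plan is to reduce (\ref{morseval}) to the same homological bookkeeping used in smooth Morse theory, the gradient flow being replaced by the deformation lemmas acting through the already well-posed index of Definition \ref{ind}. By hypothesis only finitely many critical levels $c_1<c_2<\dots<c_N$ lie between the two regular levels $a$ and $b$. I first insert regular values $t_0<t_1<\dots<t_N$, with $\{t_0,t_N\}=\{a,b\}$ and $\Omega^{t_0}\subset\Omega^{t_1}\subset\dots\subset\Omega^{t_N}$, chosen so that $c_k$ is the only critical level in $(t_{k-1},t_k)$; then I pick $\varepsilon>0$ small and uniform over the $c_k$ with $[c_k-\varepsilon,c_k+\varepsilon]\subset(t_{k-1},t_k)$. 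For such $\varepsilon$, Theorem \ref{main} guarantees that $i(k_{c_k})=P_\lambda(\Omega^{c_k+\varepsilon},\Omega^{c_k-\varepsilon})$ is independent of the choice.

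The first step is to identify each index with the relative polynomial of a pair of consecutive interpolating sublevels. The strips $[t_{k-1},c_k-\varepsilon]$ and $[c_k+\varepsilon,t_k]$ contain no critical level, so Lemma \ref{deflem1} makes $\Omega^{t_{k-1}}$ a deformation retract of $\Omega^{c_k-\varepsilon}$ and $\Omega^{c_k+\varepsilon}$ a deformation retract of $\Omega^{t_k}$, the retractions fixing the smaller sublevels pointwise. Comparing the long exact cohomology sequences of the pairs $(\Omega^{t_k},\Omega^{t_{k-1}})$ and $(\Omega^{c_k+\varepsilon},\Omega^{c_k-\varepsilon})$ through these homotopy equivalences and applying the five lemma, the homotopy invariance of Alexander--Spanier cohomology yields
\[ P_\lambda(\Omega^{t_k},\Omega^{t_{k-1}})=P_\lambda(\Omega^{c_k+\varepsilon},\Omega^{c_k-\varepsilon})=i(k_{c_k}),\qquad k=1,\dots,N. \]

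The second step is the telescoping. Applying Lemma \ref{pl} inductively to the nested closed subspaces $\Omega^{t_0}\subset\Omega^{t_1}\subset\dots\subset\Omega^{t_N}$, the consecutive relative polynomials add up, modulo a $(1+\lambda)$-multiple of a formal series $Q_\lambda$ with coefficients in $\N\cup\{+\infty\}$, to the relative polynomial of the outer pair:
\[ \sum_{k=1}^{N}P_\lambda(\Omega^{t_k},\Omega^{t_{k-1}})=P_\lambda(\Omega^{t_N},\Omega^{t_0})+(1+\lambda)Q_\lambda . \]
Since $\{t_0,t_N\}=\{a,b\}$, the right-hand pair is the one occurring in (\ref{morseval}); together with the first step this proves the finite-strip relation.

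Finally, for the global relation (\ref{morseval2}) I would take the lower level below the minimal energy, which exists by Theorem \ref{min}, so that the corresponding sublevel is empty and $P_\lambda(\Omega^{t_N},\emptyset)=P_\lambda(\Omega^{t_N})$, and then let the upper level tend to $+\infty$ along the exhaustion $\Omega=\bigcup_b\Omega^b$. Here the finiteness hypothesis bears only on each fixed sublevel, so infinitely many critical levels may accumulate and both the sum $\sum_c i(k_c)$ and the series $Q_\lambda$ must be read as coefficient-wise limits in $\N\cup\{+\infty\}$. The main obstacle is precisely this passage to the limit: one must invoke the continuity property of Alexander--Spanier cohomology (Theorem \ref{alex}) to secure $\dim H^q(\Omega^b)\to\dim H^q(\Omega)$ for every $q$ and the corresponding stabilization of the auxiliary series, so that the finite-strip identities obtained above pass to the limit and yield (\ref{morseval2}). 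By contrast, the finite-strip argument is routine homological algebra once the deformation lemmas are in hand.
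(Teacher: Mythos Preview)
Your argument follows essentially the same route as the paper's own proof: list the finitely many critical levels between $a$ and $b$, use the deformation Lemma~\ref{deflem1} together with Theorem~\ref{main} to identify each $i(k_{c_k})$ with the relative Poincar\'e polynomial of a pair of adjacent regular sublevels, telescope these via iterated application of Lemma~\ref{pl}, and then pass to the limit for (\ref{morseval2}) using that the bottom sublevel is empty. Your write-up is in fact more explicit than the paper's (which compresses the first two steps into one line); the only caveat is that Theorem~\ref{alex} is a statement about shrinking neighbourhoods of a closed pair, not about an exhaustion $\Omega=\bigcup_b\Omega^b$, so it is not literally the tool that controls $\dim H^q(\Omega^b)\to\dim H^q(\Omega)$---but the paper's proof is no more precise on this limiting step.
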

\begin{proof}
We know that there is a finite set, say $\{c_1,\cdots,c_l\}$, of
critical levels in $[a,b]$. We can, by previous theorem, iterate
(\ref{plambda}) obtaining that
\begin{equation}
\sum_{j=1}^l i(k_{c_j})= {
P}_\lambda({\Omega}^{c_{l+{\varepsilon}}},{\Omega}^{c_{1-{\varepsilon}}})+(1+\lambda)
{  Q}_\lambda.
\end{equation}
Then by deformation lemma we obtain (\ref{morseval}).

Finally, by a limiting process, and considering that
${\Omega}^{-1}=\emptyset$ we can prove (\ref{morseval2}) (we
recall that, by definition, ${  P}_\lambda({\Omega},\emptyset)={
P}_\lambda({\Omega})$).
\end{proof}

\section{Morse Theory For Geodesics}\label{sezpunti}

In this section we will define the Morse index for an isolated
geodesics and finally prove, as claimed, the Morse relations.
Under some {\em a priori\/} bound on the number of geodesics, we
obtain an analogous of the Morse relations that holds in the
smooth case.

\begin{defin}
\label{indgeo} Let $c$ an isolated critical value. Let $k_c$ the
set of $c$ energy geodesics and let $\gamma$ an isolated point of
$k_c$. Then
\begin{equation}
i(\gamma)={  P}_\lambda({\Omega}^c,{\Omega}^c\smallsetminus
\gamma)
\end{equation}
\end{defin}

To proceed, we need to recall the continuity property of
Alexander-Spanier cohomology (see \cite[Cor. 6.6.3]{Spa66}).
\begin{teo}
\label{alex} Let $X\supset A\supset B$, $X$ a paracompact
Hausdorff space, $A,B$ closed in $X$.Then
\begin{equation}
\liminj_{(U,V)\supset(A,B)} H^q(U,V)=H^q(A,B).
\end{equation}
\end{teo}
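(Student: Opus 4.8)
The plan is to prove the statement directly from the definition of Alexander--Spanier cohomology in terms of cochains, following Spanier's treatment. Recall that for a space $Y$ one forms the complex $C^q(Y)$ of all functions $Y^{q+1}\to{\mathbb F}$ with the simplicial coboundary, and quotients by the subgroup $C_0^q(Y)$ of \emph{locally zero} cochains, namely those vanishing on $W^{q+1}$ for some neighborhood $W$ of the diagonal; writing $\bar C^q(Y)=C^q(Y)/C_0^q(Y)$, the cohomology of $\bar C^*(Y)$ is $H^q(Y)$. For a pair one uses the relative complex $\bar C^q(Y,Z)=\ker\big(\bar C^q(Y)\to\bar C^q(Z)\big)$. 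Restriction of functions induces, for every pair of neighborhoods $(U,V)\supset(A,B)$ with $V\subset U$, a chain map $\bar C^*(U,V)\to\bar C^*(A,B)$, hence a homomorphism $\Phi\colon\liminj_{(U,V)}H^q(U,V)\to H^q(A,B)$. I would prove that $\Phi$ is an isomorphism by establishing surjectivity and injectivity separately.

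For surjectivity, start with a relative cocycle $z$ on $(A,B)$ and extend the underlying function to $\tilde z\colon X^{q+1}\to{\mathbb F}$. Its coboundary $\delta\tilde z$ restricts on $A$ to $\delta z$, which is locally zero on $A$; the task is to produce a neighborhood $U\supset A$ and a neighborhood of the diagonal on which $\delta\tilde z$ itself vanishes, so that $\tilde z$ becomes a genuine cocycle on $U$, and then to choose $V\supset B$ inside $U$ on which $\tilde z$ is locally zero, using that the restriction of $z$ to $B$ was locally zero. The resulting class on $(U,V)$ restricts to $[z]$. For injectivity one argues symmetrically: a class in the limit is represented on some $(U_0,V_0)$ and bounds after restriction to $(A,B)$; lifting the bounding cochain from $A$, extending it, and controlling its coboundary shows that the class already bounds on a smaller $(U_1,V_1)$, hence is zero in the limit.

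The only real difficulty, in both halves, is the promotion step: upgrading a property that holds on the closed set $A$, namely being locally zero near the diagonal of $A$, to the same property on a full neighborhood $U$ of $A$ in $X$. This is exactly where the hypotheses enter, since paracompact Hausdorff implies normal and, more precisely, yields the tautness of closed subspaces; it is carried out by a covering and shrinking argument, covering $A$ by sets on which the extended cochain is controlled and patching them into a single neighborhood and a single diagonal neighborhood via a partition of unity or the shrinking lemma. Keeping track of the subspace $B$ throughout adds only bookkeeping, because $V$ can be chosen inside $U$ compatibly with the separation of $A\smallsetminus B$ from $B$. Once the promotion step is in place, the verification that $\Phi$ respects the coboundary and the relative structure is routine, and the isomorphism follows.
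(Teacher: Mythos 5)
The paper does not actually prove this statement: it is recalled verbatim from Spanier (\cite[Cor.~6.6.3]{Spa66}), where it follows from the tautness of closed subspaces of paracompact Hausdorff spaces. Reproving it from the cochain definition is a legitimate project, but your sketch has a genuine gap at exactly the point you flag as ``the only real difficulty''. In the surjectivity half you first extend the relative cocycle $z$ to a function $\tilde z\colon X^{q+1}\to\mathbb F$ with no constraint on the extension, and then claim that paracompactness lets you ``promote'' the local vanishing of $\delta z$ on $A$ to local vanishing of $\delta\tilde z$ on a neighborhood $U$ of $A$. No covering or shrinking argument can do this for an extension chosen in advance, because $\delta\tilde z$ evaluated on a tuple with some entries in $A$ and some just outside involves values of $\tilde z$ at mixed tuples, which the hypothesis on $z$ does not control. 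Already for $q=0$, $X=\R$, $A=[0,1]$, $z\equiv 1$ and $\tilde z=\chi_A$ the extension by zero, one has $\delta\tilde z(x,y)=\chi_A(y)-\chi_A(x)$, which is nonzero on pairs arbitrarily close to the diagonal inside every neighborhood of $A$; so $\tilde z$ is a cocycle on no neighborhood $U$, even though $z$ is a cocycle on $A$. The whole content of Spanier's Lemma 6.6.1 and Theorem 6.6.2 is the construction of the \emph{right} extension, built from a locally finite refinement of the covering of $A$ that witnesses the local vanishing, so that near $A$ the extended cochain only sees values pulled back from $A$. That construction is absent from your sketch, and the same objection applies to the injectivity half, where you propose to lift a bounding cochain from $A$ and ``control its coboundary''.

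Two further remarks. Invoking ``the tautness of closed subspaces'' inside the promotion step is circular, since tautness is precisely the absolute case of the statement being proved. And a partition of unity is not an available tool here: the coefficients lie in an arbitrary field $\mathbb F$, possibly of positive characteristic, and Alexander--Spanier cochains cannot be patched by averaging against real-valued functions; the correct combinatorial substitute is the locally finite refinement argument just described. If the goal is simply to use the theorem, the honest route is the one the paper takes, namely citing \cite[Cor.~6.6.3]{Spa66}; if the goal is a self-contained proof, the missing extension lemma must be stated and proved, not asserted.
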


We can prove now the main theorem of this paper.
\begin{teo}
\label{mainpunt} Let M a conical manifold. let $p,q\in M$ s.t. $E$
admits only a finite set of geodesics for every sublevel
${\Omega}^c$.

Then, if $a,b$ are regular values, we have {\em
\begin{equation}
\label{morsepunt} \sum_{\gamma \in{\Omega}_a^b\text{
geodesic}}i(\gamma)= {
P}_\lambda({\Omega}^a,{\Omega}^b)+(1+\lambda) {  Q}_\lambda.
\end{equation}
} Furthermore, if we consider the whole space we obtain {\em
\begin{equation}
\label{morsepunt2} \sum_{\gamma\text{ geodesic}}i(\gamma)= {
P}_\lambda({\Omega})+(1+\lambda) {  Q}_\lambda.
\end{equation}
}
\end{teo}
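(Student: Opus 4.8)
The plan is to reduce the statement about individual geodesics (Theorem \ref{mainpunt}) to the statement about critical levels (Theorem \ref{mainval}), which is already established. The bridge between the two is to show that for an isolated critical value $c$, the level-index $i(k_c)$ decomposes as the sum of the point-indices $i(\gamma)$ over the finitely many geodesics $\gamma$ at that level. Concretely, I would first prove the identity
\begin{equation}
\label{plan:split}
i(k_c)=\sum_{\gamma\in k_c}i(\gamma),
\end{equation}
using the definitions $i(k_c)={P}_\lambda({\Omega}^{c+{\varepsilon}},{\Omega}^{c-{\varepsilon}})$ (Definition \ref{ind}) and $i(\gamma)={P}_\lambda({\Omega}^c,{\Omega}^c\smallsetminus\gamma)$ (Definition \ref{indgeo}). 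Once \eqref{plan:split} holds, summing over all critical levels in $(a,b)$ and invoking Theorem \ref{mainval} yields \eqref{morsepunt} directly, with the same formal series ${Q}_\lambda$; the whole-space relation \eqref{morsepunt2} then follows by the same limiting process used at the end of the proof of Theorem \ref{mainval}, using ${\Omega}^{-1}=\emptyset$ and ${P}_\lambda({\Omega})={P}_\lambda({\Omega},\emptyset)$.

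To establish \eqref{plan:split} I would argue in two steps. First, I would show that ${P}_\lambda({\Omega}^{c+{\varepsilon}},{\Omega}^{c-{\varepsilon}})={P}_\lambda({\Omega}^c,{\Omega}^{c-{\varepsilon}})$ for small ${\varepsilon}$: since $c$ is the only critical value in $[c,c+{\varepsilon}]$, the deformation lemmas (Lemma \ref{deflem1} and Lemma \ref{deflem2}) give that ${\Omega}^{c+{\varepsilon}}$ retracts onto ${\Omega}^c$ up to a neighborhood of $k_c$, so the relevant cohomology is unchanged. Next, because the finitely many geodesics $\gamma_1,\dots,\gamma_r$ at level $c$ are isolated and distinct, I would choose disjoint neighborhoods and use excision together with the additivity of Poincaré polynomials over disjoint pieces to write ${P}_\lambda({\Omega}^c,{\Omega}^{c-{\varepsilon}})=\sum_{k=1}^r{P}_\lambda({\Omega}^c,{\Omega}^c\smallsetminus\gamma_k)$. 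The key technical tool here is the continuity property of Alexander–Spanier cohomology, Theorem \ref{alex}: it lets me pass from cohomology of the open/closed pairs produced by the retractions to the cohomology of the exact pairs $({\Omega}^c,{\Omega}^c\smallsetminus\gamma_k)$ appearing in Definition \ref{indgeo}. This is precisely why the Alexander–Spanier theory, rather than singular cohomology, was singled out earlier.

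The main obstacle I anticipate is the excision/additivity step in the nonsmooth setting. In the smooth case one separates critical points by disjoint handles coming from the Morse lemma, but here the energy is not differentiable and $\gamma$ need not be a nondegenerate critical point; moreover geodesics passing through a vertex sit inside the sets $\Sigma_i$ that are only compact (Lemma \ref{comp}), not smooth submanifolds. So I cannot rely on local normal forms. Instead I would isolate each $\gamma_k$ inside a small neighborhood where ${\Omega}^c\smallsetminus\gamma_k$ deformation-retracts onto ${\Omega}^{c-{\varepsilon}}$ off that neighborhood, using Lemma \ref{retru} to push energy down outside the critical set and Lemma \ref{retrsigma} to handle the vertex-meeting part; the delicate point is checking that these retractions can be performed simultaneously and compatibly for the distinct $\gamma_k$, so that the excision isomorphism is legitimate and the cohomology of the pair splits as a direct sum over $k$. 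Verifying this compatibility — essentially that the local contributions do not interfere — is where the real work lies, and it is exactly the place where the continuity of Alexander–Spanier cohomology and the finiteness hypothesis on the number of geodesics are both essential.
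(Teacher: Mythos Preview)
Your proposal is correct and follows essentially the same route as the paper: reduce to Theorem~\ref{mainval} by proving the splitting $i(k_c)=\sum_{\gamma\in k_c}i(\gamma)$, using the second deformation lemma to replace ${\Omega}^{c-\varepsilon}$ by ${\Omega}^c\smallsetminus B_{k_c}(d)$, the continuity of Alexander--Spanier cohomology (Theorem~\ref{alex}) to shrink ${\Omega}^{c+\varepsilon}$ to ${\Omega}^c$, and finally excision to separate the contributions of the individual geodesics.

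The one place where the paper proceeds differently from what you anticipate is your ``main obstacle''. You plan to build, for each $\gamma_k$, compatible local retractions from Lemmas~\ref{retrsigma} and~\ref{retru} and then check that they do not interfere; the paper avoids this geometric construction entirely. Instead it observes that ${\Omega}^c\smallsetminus B_{k_c}(d)$ and ${\Omega}^c\smallsetminus k_c$ are locally contractible, so Alexander--Spanier cohomology agrees with singular cohomology, and (working over a field) with $\Hom$ of singular homology. Then the passage from $B_{k_c}(d)$ to $k_c$ is handled purely homologically: every relative singular chain in $({\Omega}^c,{\Omega}^c\smallsetminus k_c)$ has compact support, hence already lies in $({\Omega}^c,{\Omega}^c\smallsetminus B_{k_c}(d))$ for $d$ small, and since the latter groups are eventually constant in $d$ the direct limit is trivial to compute. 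This sidesteps the compatibility-of-retractions issue you flag and makes the excision step immediate; your retraction approach would also work in principle, but it is more laborious and is not what the paper does.
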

\begin{proof}
We know, by theorem \ref{mainval} that
\begin{displaymath}
\sum_{c\text{ critical in }(a,b)}i(k_c)= {
P}_\lambda({\Omega}^a,{\Omega}^b)+(1+\lambda) {  Q}_\lambda;
\end{displaymath}
where
\begin{displaymath}
i(k_c)={
P}_\lambda({\Omega}^{c+{\varepsilon}},{\Omega}^{c-{\varepsilon}}).
\end{displaymath}

Because the critical point are in a finite number, we can choose
$d$ s.t., if $i\neq j$, $B(\gamma_i,d)\cap
B(\gamma_j,d)=\emptyset$. Setting $B_{k_c}(d)=\bigcup_{\gamma\in
k_c} B_\gamma$, we have obviously, by the second deformation
lemma, that ${\Omega}^{c-{\varepsilon}} \simeq
{\Omega}^{c}\smallsetminus B_{k_c}(d)$, thus
\begin{displaymath}
{
P}_\lambda({\Omega}^{c+{\varepsilon}},{\Omega}^{c-{\varepsilon}})=
{ P}_\lambda({\Omega}^{c+{\varepsilon}},{\Omega}^{c}\smallsetminus
B_{k_c}(d)).
\end{displaymath}
Then, by the continuity property of the Alexander-Spanier
cohomology (Theorem \ref{alex}) we have
\begin{displaymath}
{
P}_\lambda({\Omega}^{c+{\varepsilon}},{\Omega}^{c}\smallsetminus
B_{k_c}(d))= { P}_\lambda({\Omega}^{c},{\Omega}^{c}\smallsetminus
B_{k_c}(d)).
\end{displaymath}
To obtain the proof we want to show that
\begin{displaymath}
H^*({\Omega}^{c},{\Omega}^{c}\smallsetminus  B_{k_c}(d))\simeq
H^*({\Omega}^{c},{\Omega}^{c}\smallsetminus  k_c).
\end{displaymath}
It's easy to see that both ${\Omega}^{c}\smallsetminus
B_{k_c}(d)$, ${\Omega}^{c}\smallsetminus  k_c$ are locally
contractible subsets, so there is an isomorphism between
Alexander-Spanier and singular cohomology $\bar H^*$ (see
\cite[Chap.6,Sec.9]{Spa66}). Furthermore, according to Definition
\ref{defpol}, we are working with coefficient in a field ${\mathbb
F}$, so there is also an isomorphism between $\bar H^*$ and
$\Hom(\bar H_*)$ (see \cite[Th.5.5.3]{Spa66}). So it is enough to
show that
\begin{displaymath}
\Hom(H_*({\Omega}^{c},{\Omega}^{c}\smallsetminus
B_{k_c}(d)))\simeq
\Hom(H_*({\Omega}^{c},{\Omega}^{c}\smallsetminus k_c)).
\end{displaymath}
Given $\omega\in H_*({\Omega}^{c},{\Omega}^{c}\smallsetminus k_c)
$, we know that $\omega\in
H_*({\Omega}^{c},{\Omega}^{c}\smallsetminus  B_{k_c}(d)) $ for a
sufficiently small $d$, because $\omega$ has compact support. So
\begin{displaymath}
H_*({\Omega}^{c},{\Omega}^{c}\smallsetminus k_c)= \liminj_d
H_*({\Omega}^{c},{\Omega}^{c}\smallsetminus  B_{k_c}(d)),
\end{displaymath}
but $H_*({\Omega}^{c},{\Omega}^{c}\smallsetminus  B_{k_c}(d))$ is
definitely constant in $d$, thus we have that
\begin{displaymath}
H_*({\Omega}^{c},{\Omega}^{c}\smallsetminus  B_{k_c}(d))=
H_*({\Omega}^{c},{\Omega}^{c}\smallsetminus k_c).
\end{displaymath}
We proved that
\begin{displaymath}
{
P}_\lambda({\Omega}^{c+{\varepsilon}},{\Omega}^{c-{\varepsilon}})=
{ P}_\lambda({\Omega}^c,{\Omega}^{c}\smallsetminus k_c).
\end{displaymath}
Finally, by excision, we obtain
\begin{displaymath}
{  P}_\lambda({\Omega}^{c},{\Omega}^{c}\smallsetminus k_c)=
\bigoplus_{\gamma\in k_c} {
P}_\lambda({\Omega}^{c},{\Omega}^{c}\smallsetminus \gamma)=
\sum_{\gamma\in k_c}i(\gamma)
\end{displaymath}
that proves (\ref{morsepunt}). As usual by a limiting process we
prove (\ref{morsepunt2}).
\end{proof}

\begin{rem}
Following the proof of the above theorem, it's easy to see that,
if $k_c=\{\gamma\}$, then $i(k_c)=i(\gamma)$, so the definition of
index for a geodesic and for a critical set are compatible.
\end{rem}
By the index we can also define the multiplicity of a critical
point.
\begin{defin}\label{mult}
Let $\gamma$ an isolated critical point of energy. Then
\begin{displaymath}
\mult(\gamma)=i(\gamma)|_{\lambda=1}.
\end{displaymath}
\end{defin}

It is well known that the geodesics with high multiplicity are
unstable in smooth manifolds, i.e. up to small perturbations of
extremal points $p$ and $q$ we can find only isolated geodesics,
that have multiplicity 1. For conical manifolds this is false.

We will show in the next example, that in some cases, given $p,q$
on a conical manifold $M$, there exists a geodesic $\gamma_0$ with
$\mult(\gamma_0)>1$ joining them; for a sufficiently small change
of the boundary data $p,q$ we will obtain another geodesic with
the same multiplicity of $\gamma_0$.

\begin{ex}\label{cono}

Let $C$ be a 2-dimensional  Euclidean half cone embedded in $R^3$;
let $p,q$ be two points of $C$ and let $v$ be its vertex. We want
to estimate the number of standard geodesics between $p$ and $q$.

\begin{figure}[h]
\begin{center}
\includegraphics[scale=.8]{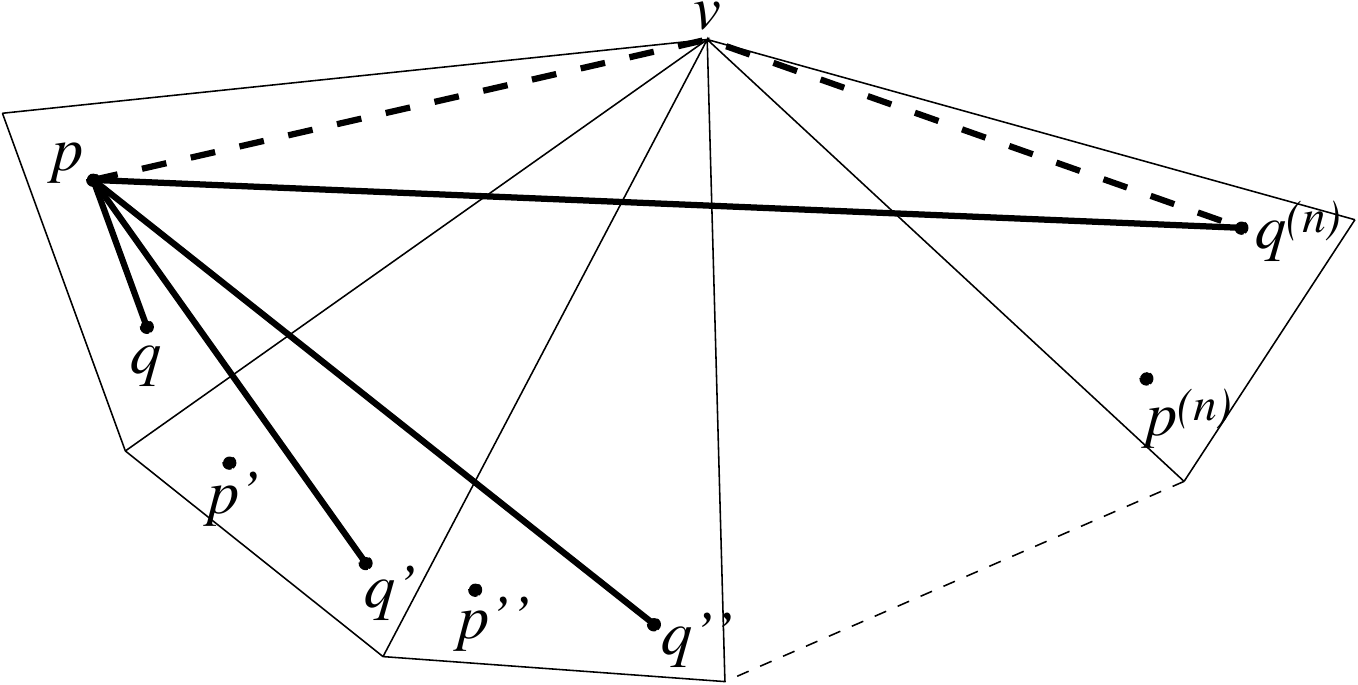}
\caption{Geodesic on a Cone.} \label{Figure2}
\end{center}
\end{figure}

We represent $C$ as a circular sector in the plane: let $\alpha$
be the wideness of the sector; we see that, if $\alpha$ is
sufficiently small, there are many regular geodesic joining $p$
and $q$ (see, for instance \cite[exercise 6, section 4-7]{dC76}),
and the number of these geodesics depends on $p,q$ and $\alpha$.
If $\alpha$ is bigger than $\pi$, on the contrary, we have an
unique geodesic joining $p$ and $q$. We can also observe that all
these curves have different energies, and that they are shorter
than the broken straight line $\gamma_0$, which joins $p$ to $v$
and $v$ to $q$, as shown in Fig \ref{Figure2}. Because the cone is
flat, any one of these geodesics has Morse index 0. In this case
classical Morse theory fails, in fact ${  P}_\lambda=1$, because
the cone is contractible (see lemma \ref{pm}), so
\begin{equation}
\sum_{i=1}^n \lambda^{m(\gamma_i)}=n=1+(1+\lambda){  Q}_\lambda
\end{equation}
and so there will be at least $n-1$ geodesics whose Morse index is
1: this is a contradiction. If use our definition of generalized
geodesic (Definition \ref{defgeod}) to estimate the number of
critical points of energy, we see that we must consider all the
classical geodesics $\gamma_1$,\ldots,$\gamma_n$, plus the broken
geodesic $\gamma_0$ (up to reparametrization).  The critical
levels are all distinct, so every geodesic is isolated and  we can
apply (\ref{morsepunt2}), which states that Morse relations hold.
These estimates give us that
\begin{equation}
n+i(\gamma_0)=1+(1+\lambda){  Q}_\lambda,
\end{equation}
then
\begin{equation}
{  Q}_\lambda=n-1+{  R}_\lambda,
\end{equation}
where ${  R}_\lambda$ is a formal series with non negative
coefficients.

So
\begin{equation}
i(\gamma_0)\geq(n-1)\lambda,
\end{equation}
and
\begin{equation}
\mult(\gamma_0)\geq(n-1).
\end{equation}
As claimed, we have a geodesic $\gamma_0$ with an high
multiplicity. If we look at the definition of this curve, we have
obviously that $\mult(\gamma_0)$ is stable for small perturbations
of $p$ and $q$.
\end{ex}

The above example gives us also a geometrical interpretation of
multiplicity, in fact, if we smooth $C$, classical Morse theory
holds, and we will see exactly $\mult(\gamma_0)$ geodesics
appearing. We can think that, is some sense, this geodesics
accumulates in $\gamma_0$.

Furthermore  $\mult(\gamma_0)$ depends on the wideness $\alpha$
and it is zero if $\alpha>\pi$. We have a change of topology of
the energy sublevels only when $\mult(\gamma_0)\neq 0$; this fact
allows us another consideration that we clarify with the following
example.

\begin{ex}\label{piano}
Let $M=\R^2$ be endowed with the usual scalar product and let
$v=(0,0)$, $p=(0,1)$, $q=(1,0)$; obviously, the unique Riemannian
geodesic between $p$ and $q$ is the straight line $pq$.

We can also consider $M$ as a conical manifold with vertex $v$,
according to definition \ref{defvarieta}. We see that also the
broken line $pvq$ is a geodesic following definition
\ref{defgeod}: this fact seems artificial. By Morse relations we
prove that $i(pvq)=0$, in fact, $i(pq)=1$, and ${  P}_\lambda=1$,
thus
\begin{equation}
i(pvq)+1=1+(1+\lambda){ Q}_\lambda.
\end{equation}
Then even $\mult(pvq)=0$. The geodesic $pvq$, thus, does not
affect the structure of the sublevels of energy: the multiplicity
of a curve allows us to distinguish the geodesics as $pvq$ from
the ones that are meaningful critical points of energy.
\end{ex}
This example shows that, by the multiplicity, we also recover the
geometrical meaning of geodesic that we lost in the starting
definitions.

\section{Comparison with weak-slope approach}\label{sezconfronto}

To complete the our study about geodesics, we want to compare our
approach with Degiovanni-Marzocchi-Morbini one. These authors in
\cite{DM99,MM02} study the problem of finding geodesics in
manifolds with boundary by using the weak slope theory; under
certain hypothesis on the conical manifold, both approaches are
possible. We'll see that there is a strong bound between these
theories, and we'll be able to recover a regularity result. Let
$M$ be a conical manifold embedded in $\R^n$, and assume that
$M$ is the Lipschitz boundary of an open convex set $A\subset
\R^n$. Let $C=\R^n\smallsetminus A$, and let $p,q\in
M\subset C$. We set the functional space
\begin{displaymath}
X=\{\gamma\in H^1([0,1],C),\gamma(0)=p,\gamma(1)=q\},
\end{displaymath}
and we define a functional
\begin{equation}
\begin{array}c
J:L^2([0,1],\R^n)\rightarrow \R\cup \{\infty\},\\
\\
J(\gamma)=\left\{
\begin{array}{ll}
\int_0^1|\gamma'|^2&\text{ if }\gamma\in X\\
+\infty&\text{ otherwise},
\end{array}
\right.
\end{array}
\end{equation}
in order to have the P.S. condition for every sublevel $J^c$.
\begin{defin}
A curve $\gamma$ is a critical point of $J$ iff $|dJ|(\gamma)=0$,
where $|dJ|$ is the weak slope introduced in \cite{DM94}.
\end{defin}
We have that
\begin{teo}\label{regol}
Let $\gamma\in X$ be a critical point for $J$. Then
\begin{itemize}
\item{$|\gamma'|$ is constant almost everywhere.} \item{$\forall
(a,b)\subset[0,1]$ we have that $\gamma'\in BV((a,b),\R^n)$ and
there exists a finite Borel measure $\mu$ on $(a,b)$ and a bounded
Borel function $\nu:(a,b)\rightarrow\R^n$ s.t $\nu(s)\in
N_{\gamma(s)}C$ (the Clarke normal cone to $C$) for $\mu$ a.e.
$s\in(a,b)$ such that
\begin{displaymath}
\int_a^b|\gamma'|ds=-\int_a^b\nu\delta d\mu
\end{displaymath}
$\forall \delta\in W^{1,1}((a,b),\R^n)$ (i.e. in a
distributional sense
 $\gamma''=\nu d\mu$) }
\end{itemize}
\end{teo}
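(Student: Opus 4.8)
The plan is to reduce the weak-slope criticality to a one-sided variational inequality, and then to read off both conclusions from it. Recall that $C=\R^n\smallsetminus A$ is the complement of an open convex set, so at a point $x\in M=\partial A$ the admissible first-order variations (those keeping a curve inside $C$) are exactly the directions that do not point into $A$. Using the characterization of the weak slope for constrained functionals of Degiovanni and Marzocchi \cite{DM94}, the condition $|dJ|(\gamma)=0$ should be equivalent to the statement that $\gamma$ has non-negative first variation along every feasible direction: if $\delta\in W^{1,2}((0,1),\R^n)$ vanishes at the endpoints and $\gamma+\varepsilon\delta\in X$ for all small $\varepsilon>0$, then
\begin{displaymath}
\int_0^1\gamma'\cdot\delta'\,ds\geq 0.
\end{displaymath}
This inequality is the object from which everything follows; extracting it rigorously from the metric definition of $|dJ|$ is the first and most delicate point, and it is precisely here that the lower semicontinuity of $J$ and the P.S.\ condition used in the construction enter, together with convexity of $A$ to describe the cone of feasible directions at contact points.

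For the constancy of $|\gamma'|$ I would exploit invariance under reparametrization. If $\varphi_t(s)=s+t\psi(s)$ with $\psi(0)=\psi(1)=0$, then $\gamma\circ\varphi_t$ has the same image as $\gamma$, hence stays in $C$, and this variation is two-sided admissible, so stationarity of $J$ along it gives the inequality above with both signs. Differentiating $J(\gamma\circ\varphi_t)$ at $t=0$ then yields
\begin{displaymath}
\int_0^1|\gamma'|^2\,\psi'\,ds=0
\end{displaymath}
for every such $\psi$, which forces $|\gamma'|^2$ to be constant almost everywhere. Equivalently, since $J(\gamma)\geq\big(\int_0^1|\gamma'|\big)^2$ with equality only for constant-speed parametrizations, a critical point must already be parametrized proportionally to arc length.

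It remains to derive the measure-valued Euler--Lagrange equation on an arbitrary subinterval $(a,b)$. Testing the variational inequality with $\delta$ supported in $(a,b)$ and integrating by parts shows that the distributional $\gamma''$ pairs non-positively with every feasible $\delta$, while interior (two-sided) variations force $\gamma''=0$ off the contact set $\{s:\gamma(s)\in M\}$, so there $\gamma$ is affine. Along the contact set the one-sided inequality, together with the a priori bound coming from criticality, lets one represent $\gamma''$ as a finite $\R^n$-valued Borel measure; in particular $\gamma'\in BV((a,b),\R^n)$. Writing this measure in polar form as $\nu\,d\mu$ with $\mu\geq0$ and $|\nu|=1$ $\mu$-a.e., the polarity condition $\nu(s)\cdot v\leq0$ for all feasible $v$ at $\gamma(s)$ is, because $A$ is convex and $M$ is its Lipschitz boundary, exactly the statement $\nu(s)\in N_{\gamma(s)}C$ for the Clarke normal cone. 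A measurable-selection and Lebesgue-differentiation argument makes $s\mapsto\nu(s)$ Borel, completing the weak identity
\begin{displaymath}
\int_a^b\gamma'\cdot\delta'\,ds=-\int_a^b\nu\cdot\delta\,d\mu,\qquad\delta\in W^{1,1}((a,b),\R^n),
\end{displaymath}
which is $\gamma''=\nu\,d\mu$ in the distributional sense.

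The main obstacle, as indicated, is the first step: converting $|dJ|(\gamma)=0$ into the usable variational inequality, since the feasible cone at contact points is non-smooth and one only obtains one-sided information there. This is also where I would lean most heavily on the weak-slope machinery of \cite{DM99,MM02}. Once that inequality is in hand, the constancy of the speed and the representation of $\gamma''$ as a reaction measure valued in the Clarke normal cone are essentially the standard obstacle-problem argument, with convexity of $A$ guaranteeing that the reaction always points in $N_{\gamma(s)}C$.
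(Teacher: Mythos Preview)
The paper does not actually prove this theorem: its entire proof reads ``This theorem is a reformulation of \cite[th.~3.5]{DM99} and \cite[th.~2.10]{MM02}, which we refer to.'' In other words, Theorem~\ref{regol} is quoted from the Degiovanni--Morbini and Marzocchi--Morbini papers, not established here.

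Your proposal therefore goes well beyond what the paper does, by sketching the standard obstacle-problem argument (variational inequality $\Rightarrow$ constant speed via inner variations $\Rightarrow$ reaction measure valued in the normal cone). The outline is sound and is essentially how the cited references proceed. But note that you yourself identify the decisive step---passing from $|dJ|(\gamma)=0$ in the metric sense to the one-sided first-order inequality for feasible directions---as the place where you must ``lean most heavily on the weak-slope machinery of \cite{DM99,MM02}.'' Since that is precisely the content the paper is citing, your proof and the paper's ultimately rest on the same external input; you have unpacked more of the mechanism but have not supplied an independent argument for the key implication. If you want a self-contained proof, that first step needs to be carried out in full: one must build, from any feasible direction $\delta$ with strictly negative first variation, an explicit deformation in $L^2$ that decreases $J$ at a definite rate, contradicting $|dJ|(\gamma)=0$. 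The convexity of $A$ is what makes the feasible-direction cone tractable here, but the construction is genuinely nontrivial because the weak slope is defined via $L^2$-neighborhoods, not via tangent vectors in $X$.
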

\begin{proof}
This theorem is a reformulation of \cite[th.3.5]{DM99}
and\cite[th.2.10]{MM02}, which we refer to.
\end{proof}

We can prove now the following proposition, that states a link
between our approach and weak slope one.
\begin{prop}
Let $M$ be a conical manifold as above, and let $p,q\in M$. Let
$E$ be defined as usual in (\ref{defen2}). Suppose that exist a
$\gamma\in {\Omega}$ and an ${\varepsilon}>0$ s.t. $\gamma$ is the
unique geodesic (in the sense of Definition \ref{defgeod}) in the
strip ${\Omega}^{c+{\varepsilon}}_{c-{\varepsilon}}$, where
$c=E(\gamma)$.

If $i(\gamma)\neq 0$, then $\gamma$ is a critical point of $J$ in
sense of weak slope, i.e. $|dJ|(\gamma)=0$. Furthermore $\gamma$
has the regularity properties of theorem \ref{regol}.
\end{prop}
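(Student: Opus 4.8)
\textbf{Overall strategy.} Since the second assertion (the regularity of $\gamma$) follows immediately from Theorem \ref{regol} once we know that $|dJ|(\gamma)=0$, the whole content is to prove that $\gamma$ is a critical point of $J$ in the weak slope sense. I would argue by contradiction: assuming $|dJ|(\gamma)=\sigma>0$, I will produce a deformation of $\Omega^c$ onto $\Omega^c\smallsetminus\gamma$, which forces $i(\gamma)=P_\lambda(\Omega^c,\Omega^c\smallsetminus\gamma)=0$ (equivalently, since $\gamma$ is the unique geodesic in the strip, $P_\lambda(\Omega^{c+{\varepsilon}},\Omega^{c-{\varepsilon}})=0$), contradicting the hypothesis $i(\gamma)\neq 0$. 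The bridge between the two functionals is that $E$ lives on paths in $M$ while $J$ lives on paths in $C=\R^n\smallsetminus A$, and $\Omega\subset X$ with $J|_{\Omega}=E$. The key geometric tool is convexity of $A$.

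\textbf{The projection.} Let $\pi:\R^n\rightarrow\overline A$ be the nearest point projection onto the closed convex set $\overline A$; it is classically $1$-Lipschitz. For $x\in C$ one has $\pi(x)\in\partial A=M$ (boundary points are fixed, exterior points project to the boundary of the convex body), so $\pi$ restricts to a $1$-Lipschitz retraction $\pi:C\rightarrow M$ with $\pi(p)=p$, $\pi(q)=q$. Composition induces a map $\Pi:X\rightarrow\Omega$, $\Pi(\sigma)=\pi\circ\sigma$, and since $\pi$ is $1$-Lipschitz we have $|(\pi\circ\sigma)'|\leq|\sigma'|$ almost everywhere, whence $J(\Pi(\sigma))=E(\Pi(\sigma))\leq J(\sigma)$ for every $\sigma\in X$, and $\Pi(\beta)=\beta$ for $\beta\in\Omega$. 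Now, if $|dJ|(\gamma)=\sigma>0$, the definition of weak slope furnishes $\delta>0$ and a continuous $\mathcal{H}:B(\gamma,\delta)\times[0,\delta]\rightarrow X$ with $\mathcal{H}(\cdot,0)=\mathrm{Id}$ and $J(\mathcal{H}(\beta,t))\leq J(\beta)-\sigma t$. Setting $\tilde{\mathcal{H}}=\Pi\circ\mathcal{H}$ on $(B(\gamma,\delta)\cap\Omega)\times[0,\delta]$ gives a continuous deformation into $\Omega$ with $\tilde{\mathcal{H}}(\cdot,0)=\mathrm{Id}$ and, for $\beta\in\Omega$,
\[
E(\tilde{\mathcal{H}}(\beta,t))=J(\Pi(\mathcal{H}(\beta,t)))\leq J(\mathcal{H}(\beta,t))\leq J(\beta)-\sigma t=E(\beta)-\sigma t .
\]
In particular $E(\tilde{\mathcal{H}}(\gamma,t))\leq c-\sigma t<c$ for $t>0$, so $\tilde{\mathcal H}$ pushes $\gamma$ strictly below its own level.

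\textbf{Globalization and the contradiction.} To turn this local descent into a statement about $i(\gamma)$ I would glue $\tilde{\mathcal H}$ with the identity exactly as in the proof of Theorem \ref{main}: choose a continuous cutoff $\theta:\Omega^c\rightarrow[0,1]$ equal to $1$ near $\gamma$ and vanishing outside $B(\gamma,\delta)$, and set $G(\beta,t)=\tilde{\mathcal H}(\beta,\theta(\beta)t)$ near $\gamma$, extended by the identity elsewhere. Because $\gamma$ is the unique geodesic at level $c$ in the strip it is isolated in $\Omega^c$, so only a neighborhood of $\gamma$ is moved, every orbit stays in $\Omega^c$ (energy only decreases), and $G$ deformation-retracts $\Omega^c$ onto $\Omega^c\smallsetminus\{\gamma\}$. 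Hence the inclusion $\Omega^c\smallsetminus\gamma\hookrightarrow\Omega^c$ is a homotopy equivalence and $i(\gamma)=P_\lambda(\Omega^c,\Omega^c\smallsetminus\gamma)=0$, contradicting $i(\gamma)\neq 0$. Therefore $|dJ|(\gamma)=0$, and Theorem \ref{regol} yields the regularity.

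\textbf{Main obstacle.} The delicate point is the continuity of the induced operator $\Pi:\sigma\mapsto\pi\circ\sigma$ in the $W^{1,2}$ topology: $\pi$ is only Lipschitz (since $M$ is merely the Lipschitz boundary of a convex body), and superposition by a non-$C^1$ map is not automatically strongly continuous on $H^{1,2}$. I expect this to be the real work of the proof, to be handled by restricting to a small neighborhood of $\gamma$ and exploiting that $\gamma$ itself lies on $M$ (where $\pi$ acts as the identity), together with the fact that the deformations $\mathcal{H}$ keep images in $C$ with controlled $L^2$-displacement; one must check that the gluing of the projected flow with the identity remains continuous into $\Omega$, so that the Alexander--Spanier computation of $i(\gamma)$ applies.
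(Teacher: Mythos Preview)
Your argument is essentially correct, but the paper proceeds quite differently, and the comparison is worth making.

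The paper does not argue by contradiction at the point $\gamma$. Instead it observes that the convex projection $\pi$ gives a homotopy equivalence $J^{a}\simeq\Omega^{a}$ for every level $a$; since $i(\gamma)\neq 0$ means $\Omega^{c+\varepsilon}\not\simeq\Omega^{c-\varepsilon}$, one gets $J^{c+\varepsilon}\not\simeq J^{c-\varepsilon}$. The standard weak--slope deformation lemma for $J$ then forces the existence of \emph{some} critical point $\beta$ of $J$ with $c-\varepsilon\leq J(\beta)\leq c+\varepsilon$. By convexity $\beta([0,1])\subset M$, and Theorem~\ref{regol} (constant speed) shows that $\beta$ is a generalized geodesic in the sense of Definition~\ref{defgeod}. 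Now the \emph{uniqueness} hypothesis is invoked: $\gamma$ is the only such geodesic in the strip, so $\beta=\gamma$, hence $|dJ|(\gamma)=0$.

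The two routes differ in what they use. Your approach works directly with $\gamma$, building the local descent flow from the definition of weak slope, projecting, and gluing; the regularity theorem is only used at the very end, and the uniqueness hypothesis plays no role in the argument itself. The paper's approach offloads all deformation arguments to the existing weak--slope literature and instead uses Theorem~\ref{regol} and the uniqueness hypothesis as the crux of the proof; this makes it shorter and avoids any hand--made gluing. Your route would survive a weakening of the uniqueness hypothesis; the paper's would not.

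As for the ``main obstacle'' you flag: note that $J$ is defined on $L^{2}$ and the weak slope is taken in the $L^{2}$ metric, so the deformation $\mathcal H$ you obtain is $L^{2}$--continuous and $\Pi$ is trivially $L^{2}$--continuous (Lipschitz superposition). The genuine issue is not the regularity of $\pi$, but matching the $L^{2}$ topology of the weak--slope setting with the $H^{1}$ topology in which the sublevels $\Omega^{a}$ and the index $i(\gamma)$ were analyzed earlier in the paper. This same point is implicit in the paper's own line ``by a convexity argument, $J^{a}\simeq\Omega^{a}$'', so it is a shared technical caveat rather than a defect peculiar to your approach.
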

\begin{proof}
Obviously  $J(\gamma)=E(\gamma)=c$. Given ${\varepsilon}$ as in
the hypothesis, we know that ${\Omega}^{c+{\varepsilon}}\not\simeq
{\Omega}^{c-{\varepsilon}}$. We can easily see that, by a
convexity argument, $J^a\simeq {\Omega}^a$ for all $a\in\R$:
thus, it must exist a $\beta\in
J^{c+{\varepsilon}}_{c-{\varepsilon}}$ s.t. $|dJ|(\beta)=0$ and,
again by convexity, $\beta([0,1])\subset M$. We apply theorem
\ref{regol} and we obtain that $|\beta'|$ is constant a.e., then
$\beta$ is a generalized geodesics according to Definition
\ref{defgeod}. Also, $\beta\in
{\Omega}^{c+{\varepsilon}}_{c-{\varepsilon}}$: by hypothesis,
$\gamma$ is the unique geodesic in
${\Omega}^{c+{\varepsilon}}_{c-{\varepsilon}}$, so $\beta=\gamma$.
The proof follows immediately.
\end{proof}

\section{A final remark}

We have introduced this theory to solve a problem arising in a
natural way: {\em what is a geodesic on a cone?\/} Although the
answer is obvious for minimal ones, a general definition is
difficult. To overrides this obstacle, we have introduced a large
class of generalized geodesics in Definition \ref{defgeod}.

At a first sight our definition seems a little bit artificial, but
the existence of a Morse theory and the examples provided in the
last section, suggest that this definition is appropriate to our
purpose. Indeed, when a geodesic has a positive index then there
is a change in the topology of energy sublevels: these generalized
geodesics can be considered as critical points, at least in a
topological sense. This fact has two peculiarities.

First, a generalized geodesic is suitable for the global
variational approach.

Second, we have a criterion to establish when a generalized
geodesic is geometrically meaningful. In fact, even if our
definition might introduce "artificial" geodesics, as we have
shown in Example \ref{piano}, the index theory states that these
curves are not critical points of energy; namely, they have 0
multiplicity.

Furthermore, if $M$ is a conical manifolds which is a Lipschitz
boundary of a convex open set in $\R^n$, a geodesic $\gamma$
with non zero index is a critical point even according to
Degiovanni, Marzocchi and Morbini approach.
\nocite{dC92}
\nocite{DM94}

\end{document}